\definecolor{labelkey}{gray}{.8}
\definecolor{refkey}{gray}{.8}
\definecolor{darkred}{rgb}{0.9,0.1,0.1}
\definecolor{darkgreen}{rgb}{0,0.5,0}
\newtheorem{theorem}{Theorem}[section]
\newtheorem{lemma}[theorem]{Lemma}
\newtheorem{proposition}[theorem]{Proposition}
\theoremstyle{remark}
\renewenvironment{proof}[1][Proof]{ {\itshape \noindent {#1.}} }{$\Box$
\medskip}
\numberwithin{equation}{section}
\newcommand{\R}{\mathbb{R}}
\newcommand{\Pb}{\mathbb{P}}
\newcommand{\E}{\mathbb{E}}
\newcommand{\F}{\mathcal{F}}
\newcommand{\B}{\mathrm{B}}
\newcommand{\bbR}{\mathbb R}
\newcommand{\eps}{\varepsilon}
\def\les{\lesssim}
\newcommand{\la}{\langle}
\newcommand{\ra}{\rangle}
\newcommand{\cX}{\mathcal{X}}
\newcommand{\cY}{\mathcal{Y}}
\newcommand{\lam}{\lambda}
\newcommand{\what}{\widehat}
\begin{document}

\title{The Schr\"odinger equation with spatial white noise: the average wave function}

\author{Yu Gu\thanks{Department of Mathematics, Carnegie Mellon University, Pittsburgh, PA 15213, USA (yug2@math.cmu.edu)} \and Tomasz Komorowski\thanks{Institute of Mathematics, Polish Academy of Sciences, ul. \'{S}niadeckich 8, 00-656 Warsaw, Poland (komorow@hektor.umcs.lublin.pl)} \and Lenya Ryzhik\thanks{Department of Mathematics, Building 380, Stanford University, Stanford, CA, 94305, USA (ryzhik@stanford.edu)}}
\date{}

\maketitle

\begin{abstract}
 We prove a representation for the average wave function of the Schr\"odinger equation with a white noise potential in $d=1,2$, 
 in terms of the renormalized self-intersection local time of a Brownian motion.
\end{abstract}

\section{Introduction }

We consider the Schr\"odinger equation with a large, highly oscillatory random potential 
\begin{equation}\label{e.maineq}
i\partial_t\psi_\eps+\frac12\Delta\psi_\eps-V_\eps(x)\psi_\eps=0,  \   \ \psi_\eps(0,x)=\phi_0(x),  \  \ x\in \R^d,
\end{equation}
and the initial condition $\phi_0(x)$ that is a compactly supported  $C^\infty$
 function. The random potential is a microscopically smoothed version of a spatial white noise:
\[
V_\eps(x)=\frac{1}{\eps^{d/2}}V(\frac{x}{\eps}).
\]
Here, $V$ is a stationary, zero-mean and isotropic Gaussian random
field over a probability space $(\Omega,{\cal A},\mathbb P)$ with the expectation denoted by $\E$. If the two-point correlation
function 
\begin{equation}
\label{022502a}
R(x):=\E[V(x+y)V(y)] =\rho(|x|),\quad x,y\in\bbR^d,
\end{equation}
decays sufficiently fast, then $V_\eps(x)$ converges to  a spatial white noise $\dot W(x)$.  

In $d=2$, this problem was analyzed on a torus $\mathbb{T}^2$ in
\cite{debussche2016schr} and the whole space $\R^2$ in
\cite{debussche2017schr}. The solution of (\ref{e.maineq}) acquires a large phase
by $t\sim O(1)$, and the main result of \cite{debussche2016schr} is that the adjusted solution
\[
\phi_\eps(t,x)=\psi_\eps(t,x)e^{-iC_\eps t},
\]
that satisfies
\begin{equation}\label{e.maineq1}
i\partial_t\phi_\eps+\frac12\Delta\phi_\eps-(V_\eps(x)+C_\eps)\phi_\eps=0,  \   \ \phi_\eps(0,x)=\phi_0(x),  \  \ x\in \R^d,
\end{equation}
with $C_\eps\sim\log\eps^{-1}$,
converges to the solution of the stochastic PDE that can be formally written as
\begin{equation}\label{e.spde}
i\partial_t \phi_{\mathrm{spde}}+\frac12\Delta\phi_{\mathrm{spde}}-\dot{W}(x)\cdot\phi_{\mathrm{spde}}=0.
\end{equation}
The approach is based on a change of variable used in
\cite{hairer2015simple}, together with the mass and energy
conservations, and also applies to nonlinear equations. By
analyzing the Anderson Hamiltonian 
\[
-\frac12\Delta+V_\eps(x)+C_\eps
\]
with
the paracontrolled calculus, a spectral theory has been established in
\cite{allez2015continuous}, which also gives a meaning to the solution
to \eqref{e.spde} on $\mathbb{T}^2$. 

When $d=1$, no renormalization is
needed and $C_\eps=0$. It has been proved in \cite{zhang2012convergence} that
the solution $\phi_\eps$ of \eqref{e.maineq1}  converges to a solution to
\eqref{e.spde}, defined as an infinite series of iterated Stratonovich integrals.

%

Unfortunately, the information  on the limit  from the above considerations
is rather implicit. 
Our goal here is to understand some of the properties of the solution to~\eqref{e.maineq1}, in a more direct way. In particular,
we establish a  representation of 
$\lim_{\eps\to 0}\E[\widehat{\phi}_\eps]$ in~$d=1,2$, see Theorem
\ref{t.mainth} below. Here, and in what follows $\what f$ denotes the
Fourier transform of a  function $f$: 
\[
\what{f}(\xi)=\int_{\bbR^d} f(x)e^{-i\xi\cdot x}dx.
\]


\subsubsection*{The self-intersection local time of Brownian motion}

The representation for $\E[\what\phi_\eps]$ we are pursuing relies on the self-intersection local time of Brownian motion. For the convenience of the reader, we provide a brief introduction here. Let~$\{B_t,t\geq 0\}$ be a standard
$d-$dimensional Brownian motion starting from the origin, defined on a probability space $\Sigma$, with the respective expectation $\E_\B$.
 
In $d=1$,
one can show \cite{chen} that for any $f\in L^1(\bbR)$ with $\int_{\bbR}   f(x)dx=1$
and $t>0$, the following limit exists and represents the intersection time of the Brownian motion:
\begin{equation}
\label{011804}
 \beta([0,t]_<^2):=\lim_{\eps\to0}\frac{1}{\eps}\int_0^t ds\int_0^s
  f\left(\frac{B_s-B_u}{\eps}\right)du=\frac{1}{2}\int_{\bbR}l^2(t,x)dx,\quad
  \mbox{a.s. and in $L^1(\Sigma)$},
 \end{equation}
where $l(t,x)$ is the local time of $\{B_t,t\geq 0\}$. Here, given a subset $A\subset[0,+\infty)$, we denote 
by~$A^2_{<}:=\{(s,t)\in A^2:\,s<t\}$.
On the formal level, we can think of  $\beta([0,t]_<^2)$ as 
\[
\beta([0,t]_<^2)=\int_0^t \int_0^s \delta(B_s-B_u) duds.
\] 

The direct analogue of the  self-intersection time 
in dimensions $d\ge 2$ becomes infinite, and a
suitable renormalization is needed  to recover a non-trivial
object. The {renormalized self-intersection local time of a planar Brownian
motion} $\gamma([0,t]_<^2)$  formally corresponds to
\begin{equation}\label{jun110}
\gamma([0,t]_<^2)=\int_0^t \int_0^s (\delta(B_s-B_u)-\E_\B[\delta(B_s-B_u)]) duds.
\end{equation}
To make sense of (\ref{jun110}) in $d=2$, one defines the renormalized
self-intersection local time  as
\begin{equation}
\label{032204}
\gamma([0,t]_<^2):=\lim_{\eps\to0}\int_0^t \int_0^s \left\{ \vphantom{\int_0^1}q_\eps(B_s-B_u)-\E_\B[q_\eps(B_s-B_u)]\right\} duds.
\end{equation}
The limit exists for any $t>0$~\cite{le1992some,varadhan1969appendix,yor1986precisions}.
Here, we denote 
\begin{equation}
\label{q-eps}
q_\eps(x):=\frac{1}{(2\pi \eps)^{d/2}}e^{-|x|^2/(2\eps)},\quad \eps>0.
\end{equation}
We refer to 
\cite[Section VIII.4]{le1992some} for a detailed construction.

In $d=1$, we simply let
\begin{equation}
\label{011904}
\gamma([0,t]_<^2):=\beta([0,t]_<^2)-\E_{\B}[\beta([0,t]_<^2)]=\frac{1}{2}\left\{\int_{\bbR}l^2(t,x)dx-\frac{8t^{3/2}}{3\sqrt{2\pi}}\right\}.
\end{equation}

\subsection*{The main result}

We will assume that the covariance function of the Gaussian random field $V(x)$ has the
form~(\ref{022502a}), with a function $\rho(y)$ of
the Schoenberg class~\cite{schoenberg}:
\begin{equation}
\label{012502a}
\rho(y)=\int_0^{+\infty}\exp\left\{-(\lambda y)^2/2 \right\}\mu(d\lambda),\quad y\in \R,
\end{equation}
for some finite Borel measure $\mu$ on $[0,+\infty)$.
To ensure that $V_\eps(x)$ scales to a spatial white noise with a finite variance, we assume that
\begin{equation}
\label{021904}
\bar R_d:=\int_{\bbR^d}R(x)dx=(2\pi)^{d/2}\int_0^{+\infty}\frac{\mu(d\lambda)}{\lam^d}<+\infty.
\end{equation}
To simplify some considerations, we further require that
\begin{equation}\label{e.r2}
\int_0^{+\infty} \frac{|\log\lambda|}{\lambda^d}\mu(d\lambda)<+\infty,
\end{equation}
and define 
\begin{equation}
\bar R_2'=2\pi\int_0^{+\infty}  \frac{\log\lambda}{\lambda^d}\mu(d\lambda).
\end{equation}
The constraint \eqref{e.r2} on $\mu(d\lambda)$ near the origin can be relaxed, as discussed at the end of the proof of Lemma~\ref{l.rho}
below but we are not striving for the sharpest assumptions here. 

%
%
%

Define the deterministic function
\begin{equation}\label{e.defrho0}
\rho_d(t):=\left\{\begin{array}{ll}
-\bar R_1 \dfrac{(2t)^\frac32}{3\sqrt{i\pi}},& d=1,\\
&\\
 \bar R_2\left[ \dfrac{it}{2\pi}\log\left(
                    \dfrac{t}{e}\right)-\dfrac{t}{4}\right]+\bar R_2' \dfrac{it}{\pi}, & d=2,
  \end{array}
  \right.
\end{equation}
and the renormalization constant
\begin{equation}\label{e.defrho1}
C_\eps:=\left\{\begin{array}{ll}
0, & d=1, \\
&\\
 \dfrac{\bar R_2}{\pi} \log\eps^{-1},  & d=2.
  \end{array}
  \right.
\end{equation}
The following result is the main objective of this paper.
\begin{theorem}\label{t.mainth}
Suppose that $d=1,2$ and $\phi_\eps$ is the solution to \eqref{e.maineq1} with
$C_\eps$ given by \eqref{e.defrho1}. Then,
there exists $t_0\in (0,+\infty)$ such that for $t\in [0,t_0],\xi\in\R^d$, 
we have 
\begin{equation}\label{e.mainresult}
\lim_{\eps\to0}\E[\widehat{\phi}_\eps(t,\xi)]=\what{\phi}_0(\xi)e^{\rho_d(t)}
\E_\B\left[ \exp\left\{i^{3/2}\xi\cdot B_t-i^{3d/2} \bar R_d\gamma([0,t]_<^2)\right\}\right].
\end{equation}
\end{theorem}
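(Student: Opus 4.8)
## Proof strategy for Theorem \ref{t.mainth}

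The plan is to start from the Duhamel expansion of $\phi_\eps$ in powers of the potential, take the Fourier transform, and then the expectation. Since $V_\eps$ is Gaussian and mean-zero, only even powers survive, and Wick's theorem turns the $2n$-th term into a sum over pairings. Each pairing contributes a product of $n$ copies of the covariance $R_\eps$ (the rescaled $R$), integrated against a product of $2n$ free Schr\"odinger propagators over an ordered simplex of times $0<s_1<\dots<s_{2n}<t$. The first step is therefore to write
\begin{equation}\label{e.duhamel}
\E[\what\phi_\eps(t,\xi)]=\sum_{n\ge0}(-i)^{2n}\sum_{\text{pairings }\pi}\int_{0<s_1<\dots<s_{2n}<t}\mathcal{I}^{(n)}_{\pi,\eps}(s_1,\dots,s_{2n},\xi,t)\,ds,
\end{equation}
and to identify which pairings $\pi$ survive in the limit $\eps\to0$. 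The heuristic (and this is the crux of the matter) is that, because $R_\eps$ scales like a delta function of strength $\bar R_d$, a pair $\{s_a,s_b\}$ forces the two corresponding spatial arguments to nearly coincide; in momentum variables this creates a resonance. The \emph{nested/non-crossing} structure is what produces, after the $\eps\to0$ limit and resummation, a single Brownian motion $B$ whose self-intersections at the paired times reproduce $\gamma([0,t]_<^2)$, while the \emph{diagonal} (adjacent, $b=a+1$) pairings are exactly the ones producing the divergent constant that $C_\eps$ is designed to cancel in $d=2$ (and that is finite, hence handled by the deterministic prefactor $\rho_d(t)$, in $d=1$).

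Concretely, the key steps, in order, are: (i) rewrite the time-ordered propagator product using the Feynman--Kac / Brownian-bridge representation, so that the $2n$ spatial integrations against propagators become an expectation $\E_\B$ over a Brownian path run on $[0,t]$, evaluated at the ordered times $s_1,\dots,s_{2n}$; this converts \eqref{e.duhamel} into $\what\phi_0(\xi)\,\E_\B[e^{i^{3/2}\xi\cdot B_t}\times(\text{sum over pairings of }\prod R_\eps(B_{s_a}-B_{s_b}))]$, up to the $i$-powers coming from the $(-i)$ factors and the propagator's complex Gaussian kernel. (ii) Recognize $\sum_{n}\frac1{n!}(\sum_{\text{pairs}}\dots)^n$ — once the time-ordering is removed by symmetrization — as the exponential $\E_\B[\exp\{i^{3/2}\xi\cdot B_t - c_d\int_0^t\int_0^s R_\eps(B_s-B_u)\,du\,ds\}]$ for the appropriate constant $c_d$; this is the step where the combinatorics of pairings collapses into a self-intersection functional of a \emph{single} Brownian motion. (iii) Perform the $\eps\to0$ limit inside $\E_\B$: split $\int_0^t\int_0^s R_\eps(B_s-B_u)$ into its mean $\E_\B[\cdot]$ plus the centered part; the centered part converges, by \eqref{011804}/\eqref{032204}/\eqref{011904} and the hypotheses \eqref{021904}--\eqref{e.r2}, to $\bar R_d\,\gamma([0,t]_<^2)$ (using the Schoenberg representation \eqref{012502a} to reduce $R_\eps$ to a superposition of Gaussians $q_{\eps^2\lambda^{-2}}$ and interchange the $\mu$-integral with the limit); the mean part, after subtracting $C_\eps$, converges to the explicit deterministic quantity $\rho_d(t)$ — this is precisely the content of the auxiliary Lemma~\ref{l.rho} alluded to in the text, i.e. an explicit asymptotic evaluation of $\int_0^t\int_0^s \E_\B[R_\eps(B_s-B_u)]\,du\,ds = \int_0^t\int_0^s \int \frac{\mu(d\lambda)}{(2\pi(s-u))^{d/2}}(1+\lambda^2(s-u)/\eps^2)^{-d/2}\,du\,ds$. (iv) Justify the interchange of the $\eps\to0$ limit with $\E_\B$ by a uniform-integrability / exponential-moment bound on $\exp\{-c_d\int\int R_\eps\}$; this is where the small-time restriction $t\le t_0$ enters, since the needed exponential moments of the (renormalized) self-intersection local time — and the convergence of the series \eqref{e.duhamel} itself — are only available for $t$ small (in $d=2$, $\E_\B[\exp(\theta|\gamma|)]<\infty$ only for $\theta$ below a threshold, by the results of \cite{le1992some}; the factor of $i$ in the exponent helps, but the series/UI argument still wants $t_0$ small).

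The main obstacle I expect is step (iv) together with the tail of the pairing sum: controlling \eqref{e.duhamel} term-by-term is easy, but showing the limit can be taken inside the infinite sum (equivalently, inside $\E_\B$) requires a bound on the $2n$-th term that is both summable in $n$ and \emph{uniform in} $\eps$, and the naive bound on $\int\int R_\eps$ blows up like $\log\eps^{-1}$ in $d=2$ — which is exactly why the renormalization $C_\eps$ must be carried through the estimate, not just the limit. A clean way to organize this is to never expand at all once step (ii) is in hand: work directly with $u_\eps(t,\xi):=\what\phi_0(\xi)e^{C_\eps t}\E_\B[e^{i^{3/2}\xi\cdot B_t}\exp(-c_d\int_0^t\int_0^s R_\eps(B_s-B_u))]$, verify it solves the (Fourier-transformed, expectation-taken) equation for $\E[\what\phi_\eps]$, and then pass to the limit in this closed form using the convergence of $\int\int R_\eps$ minus its renormalized mean to $\bar R_d\gamma$ plus $\rho_d(t)$ — pushing all the hard analysis into the single deterministic computation of Lemma~\ref{l.rho} and one exponential-integrability estimate valid for $t\le t_0$. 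A secondary technical point is keeping track of the powers of $i$: the free propagator $e^{i t\Delta/2}$ contributes $i^{d/2}$ per factor and the complex Gaussian it generates has ``imaginary variance,'' which is what ultimately produces $i^{3/2}\xi\cdot B_t$ and $i^{3d/2}\bar R_d\gamma$ rather than their real counterparts; this should be handled by a careful (and explicitly justified) analytic continuation / contour-rotation argument rather than by formal manipulation.
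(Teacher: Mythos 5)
Your overall architecture is the same as the paper's: establish a Feynman--Kac formula for $\E[\what\phi_\eps]$ by matching the Duhamel/pairing expansion with the Taylor expansion of a Brownian functional and continuing analytically in the diffusivity from the real axis to $\sqrt i$ (this is Proposition~\ref{p.fk}); then split the self-intersection functional into its mean plus a centered part, identify the mean with $C_\eps$ up to the finite correction $\rho_d(t)$ (Lemmas~\ref{l.reconst} and~\ref{l.rho}), send the centered part to $\bar R_d\gamma([0,t]_<^2)$, and justify the interchange of limits by a uniform exponential-moment bound valid for $t\le t_0$. Your instinct to ``never expand at all once the closed form is in hand'' is exactly the paper's strategy. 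The heuristic about non-crossing versus adjacent pairings is a detour: once everything is resummed into the exponential, no selection of dominant diagrams is needed, and the counterterm is identified directly with $\E_\B$ of the intersection functional.

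There is, however, one genuine gap, and it sits at the technical heart of the paper. In your step (iii) you assert that the centered part converges to $\bar R_d\gamma([0,t]_<^2)$ ``by \eqref{011804}/\eqref{032204}/\eqref{011904}'', after using the Schoenberg representation to write $R_\eps$ as a superposition of Gaussians $q_{\eps^2\lambda^{-2}}$. But the functional produced by the analytic continuation is $\int_{[0,t]^2_<}R_\eps\bigl(\sqrt i\,(B_s-B_u)\bigr)\,du\,ds$, not $\int\int R_\eps(B_s-B_u)$: the Schoenberg superposition is therefore over the \emph{free Schr\"odinger kernels} $s_{\eps^2\lambda^{-2}}=q_{i\eps^2\lambda^{-2}}$, which are complex and oscillatory, not over heat kernels. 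The classical construction \eqref{032204} of $\gamma$ uses positive Gaussian mollifiers, and there is no a priori reason that replacing $q_\tau$ by $s_\tau$ yields the same limit (the absolute value of $s_\tau$ does not even converge to a delta function). Proving that $\cX_\tau(t)-\E_\B[\cX_\tau(t)]\to\gamma([0,t]_<^2)$ in $L^2$ for the Schr\"odinger-kernel approximation is Proposition~\ref{p.weakcon}, and it requires the Clark--Ocone representation \eqref{e.made} together with an explicit covariance computation in Fourier variables; it cannot be cited from the heat-kernel literature. Your closed-form candidate $\exp\{-c_d\int\int R_\eps(B_s-B_u)\}$ with a real argument is the parabolic functional and would not produce the factor $i^{3d/2}$ in \eqref{e.mainresult}. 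A related, smaller gap is in step (iv): exponential integrability of $\gamma$ itself does not give uniform integrability of the $\eps$-dependent family; one needs a bound uniform in the regularization parameter, which the paper obtains from exponential moments of the quadratic variation of the Clark--Ocone martingale (Proposition~\ref{p.uniformin}), and it is this estimate --- via a Bessel-process computation in $d=2$ --- that forces $t_0$ to be small.
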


Without the random potential, the solution to the free Schr\"odinger equation  can be written in the Fourier domain as 
\[
\what{\phi}_0(\xi) \exp\Big\{-\frac{i}{2}|\xi|^2t\Big\}=\what{\phi}_0(\xi) \E_\B[\exp(i^{3/2}\xi\cdot B_t)],
\]
so Theorem~\ref{t.mainth} shows that the effect of the white noise potential is manifested by 
the term~$ \bar R_d\gamma([0,t]_<^2)$ in \eqref{e.mainresult}. 

We briefly comment on our choice of the covariance function to be in the Schoenberg class. First, since we are interested in the limiting SPDE, the way by which the noise is regularized essentially does not affect the expression in \eqref{e.mainresult}. Secondly, most of the existing results on singular SPDEs considered random fields that decorrelate sufficiently fast or even with a finite range of correlation. In our case, with appropriate choices of $\mu(d\lambda)$ in \eqref{012502a}, the covariance function $R(x)$ can be merely integrable, which is necessary to guarantee the finiteness of $\bar R_d$. Lastly, the Schoenberg class also helps avoid several technical issues, e.g. in the proof of Propositions~\ref{p.fk} and \ref{p.weakcon}.

\subsubsection*{The stochastic and homogenization regimes}

Equation \eqref{e.maineq} is written in terms of the macroscopic variables. 
If we start from the microscopic dynamics -- the Schr\"odinger equation with a potential of a size~$\delta>0$ and a low frequency initial condition,
varying on a spatial scale $l_{\mathrm{in}}\sim \eps^{-1}\gg 1$,
\begin{equation}\label{e.unre}
i\partial_t \phi+\frac12\Delta \phi-\delta V(x)\phi=0,  \  \ \phi(0,x)=\phi_0(\eps x)
\end{equation}
then $\psi_\eps(t,x):=\phi({t}/{\eps^2},{x}/{\eps})$ solves
\eqref{e.maineq} provided $\eps=\delta^{1/(2-d/2)}$. In particular, in $d=2$, we need to choose $\eps=\delta$ to be in the ``white-noise" scaling
of (\ref{e.maineq}). In other words, the white-noise scaling in $d=2$ is equivalent to the weak coupling scaling with a low frequency initial condition. 


%
It has been shown in \cite{chen2016weak,zhang2014homogenization} that in $d\geq3$, for the low frequency initial data
$\phi(0,x)=\phi_0(\eps x)$, the diffusively rescaled wave function $\phi_\eps(t,x)=
\phi(\eps^{-2}t,\eps^{-1}x)$ converges to a homogenized limit: the solution has a deterministic limit, 
and we only observe a phase shift of the wave function in the limit, by a factor proportional to
\begin{equation}\label{jun102}
V_{\mathrm{eff}}=\int_{\bbR^d} \frac{\what{R}(p)dp}{|p|^{2}}.
\end{equation}
The integral in \eqref{jun102} blows up in $d=2$ due to the singularity at the origin, and the
role of the large constant $C_\eps$ appearing in \eqref{e.maineq1} is
to compensate for this divergence, so that we can obtain a  non-trivial  limit, which is now random, unlike in $d\ge 3$. 
One may ask if there is a shorter time scale~$T_\eps$, on which the solution of (\ref{e.unre}) is affected in a non-trivial way but is still deterministic
in~$d=2$.
The answer is  given by the following theorem:  $T_\eps=\eps^{-2}$, 
with~$\delta=\eps|\log\eps|^{-1/2}$.

%
\begin{theorem}\label{t.homo}
Consider 
\begin{equation}
\label{012104}
i\partial_t \phi_\eps+\frac12\Delta\phi_\eps- \frac{1}{\eps|\log\eps|^\frac12}V(\frac{x}{\eps})\phi_\eps=0,  \  \  \phi_\eps(0,x)=\phi_0(x), \  \ x \in\R^2,
\end{equation} 
and 
\begin{equation}
\label{012104h}
i\partial_t \phi_{\hom}+\frac12\Delta\phi_{\hom}+ \frac{\bar R_2}{\pi}\phi_{\hom}=0,  \  \  \phi_{\hom}(0,x)=\phi_0(x), \  \ x \in\R^2,
\end{equation}
with $\phi_0\in L^2(\R^2)$. Then, for any $t>0$, we have
\begin{equation}
\lim_{\eps\to0}\int_{\R^2} \E[|\what{\phi}_\eps(t,\xi)-\what{\phi}_{\hom}(t,\xi)|^2] d\xi=0.
\end{equation}

\end{theorem}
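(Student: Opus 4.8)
The plan is to reduce this to a special case of the analysis developed for Theorem \ref{t.mainth}, exploiting the scaling dictionary spelled out just above: the equation \eqref{012104} is exactly \eqref{e.maineq1} in $d=2$ written on the macroscopic scale, with $\eps = \delta$ and the renormalization constant $C_\eps = \frac{\bar R_2}{\pi}\log\eps^{-1}$, but with the \emph{coupling} parameter weakened by an extra factor $|\log\eps|^{-1/2}$ relative to the white-noise scaling. In other words, the potential is $\eps^{-1}|\log\eps|^{-1/2}V(x/\eps)$ rather than $\eps^{-1}V(x/\eps)$. So I would first recast \eqref{012104} via the Duhamel/Feynman--Kac expansion used to prove Theorem \ref{t.mainth}: iterating the mild formulation produces a series whose $n$-th term is an $n$-fold time-ordered integral of products of the potential against free Schr\"odinger propagators, and in the white-noise scaling the relevant contributions came from exactly-paired diagrams, giving rise to the self-intersection local time $\gamma([0,t]_<^2)$ with the $\log\eps^{-1}$ divergence absorbed by $C_\eps$.

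The key observation is that the extra $|\log\eps|^{-1/2}$ per potential insertion changes the counting: a diagram with $n$ potential insertions now carries a factor $|\log\eps|^{-n/2}$. Taking expectations and using Gaussianity, only even $n=2k$ survive, each contributing a product of $k$ pair contractions; each contracted pair in the white-noise scaling produced a factor of order $\log\eps^{-1}$ (the divergence killed by $C_\eps$), times a bounded ``finite part''. With the weak coupling, a $2k$-insertion fully-paired diagram therefore scales like $|\log\eps|^{-k}\cdot(\log\eps^{-1})^k = O(1)$ at leading order, but the \emph{finite parts} — which in Theorem \ref{t.mainth} assembled into $\bar R_2\gamma([0,t]_<^2)$ and the subleading constants in $\rho_2(t)$ — are now suppressed by an additional $|\log\eps|^{-k}$ and vanish in the limit. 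Concretely, after renormalization by $C_\eps$ each pair contributes $\frac{\bar R_2}{\pi}\log\eps^{-1}\cdot|\log\eps|^{-1}\cdot t + o(1) = \frac{\bar R_2}{\pi}t + o(1)$ times the appropriate $i$-power, so the surviving series is $\sum_k \frac{1}{k!}\big(\text{const}\cdot t\big)^k$, i.e. a pure exponential in $t$ — precisely $e^{it\bar R_2/\pi}$ in the Fourier variable, which is the solution operator of \eqref{012104h}. All genuinely random fluctuations (the non-paired or partially-paired diagrams, and the finite parts of the paired ones) are $o(1)$, which is why the limit is deterministic. This is the heuristic; making it rigorous is the content of the proof.

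For the rigorous argument I would work in $L^2$ as the statement demands, rather than pointwise in $\xi$. The cleanest route is: (i) establish an $L^2$-bounded Duhamel expansion for $\what\phi_\eps(t,\cdot)$ with explicit control of the tail, uniformly in $\eps$, using the $L^2$-isometry of the free Schr\"odinger flow and the Schoenberg-class structure of $R$ to bound oscillatory integrals (this parallels the estimates behind Propositions \ref{p.fk} and \ref{p.weakcon}); (ii) compute $\E[|\what\phi_\eps(t,\xi) - \what\phi_{\hom}(t,\xi)|^2]$ by expanding both the $\eps$-series and $\what\phi_{\hom}(t,\xi) = \what\phi_0(\xi)e^{-it|\xi|^2/2}e^{it\bar R_2/\pi}$ into their respective series and matching term by term; (iii) show each mismatch term is $O(|\log\eps|^{-1/2})$ (or at worst $o(1)$) after integration in $\xi$, with a summable-in-$k$ bound allowing dominated convergence on the series. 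The main obstacle is step (iii) for the paired diagrams: one must extract, uniformly in $\xi$ and with quantitative error, the asymptotics of the regularized pair contraction $\int_0^t\int_0^s [\text{propagator kernel with potential correlation } R_\eps] - C_\eps$-counterterm, and show its finite part is $O(|\log\eps|^{-1})$ relative to the surviving $\frac{\bar R_2}{\pi}t$ — this is where the precise form \eqref{e.r2} of the measure $\mu$ near the origin and the computation already done for $\rho_2(t)$ in Theorem \ref{t.mainth} do the heavy lifting; one essentially reuses Lemma \ref{l.rho} and reads off that the $\gamma$-term and the $\log(t/e)$, $\bar R_2'$ corrections all come with one fewer power of $\log\eps^{-1}$ than needed to survive. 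A secondary technical point is justifying the interchange of $\eps\to 0$ with the infinite series; this is handled by the uniform tail bound from step (i), since $t\le t_0$ is small (and in fact here the weak coupling makes the series converge for all $t$, removing even the smallness restriction, though I would not belabor that).
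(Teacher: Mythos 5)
Your heuristic for why the limit is $e^{it\bar R_2/\pi}$ is correct in spirit and matches the paper's first-moment computation: the extra $|\log\eps|^{-1/2}$ per potential insertion kills the fluctuation (the $\gamma([0,t]_<^2)$ term and the finite parts in $\rho_2$) and leaves only the counterterm-cancelled mean. In the paper this is carried out not by diagram resummation but through the Feynman--Kac formula of Proposition~\ref{p.fk}: one writes $\E[\what\phi_\eps(t,\xi)]$ as a Brownian expectation involving $\frac{1}{\log\eps^{-1}}\int_0^\infty \cX^*_{\eps^2\lambda^{-2}}(t)\lambda^{-2}\mu(d\lambda)$, uses Lemma~\ref{l.rho} for the mean and Proposition~\ref{p.weakcon} (which gives an $\eps$-uniform $L^2$ bound on the centered part, so that dividing by $\log\eps^{-1}$ sends it to zero) for the fluctuation, and a $\theta\mapsto\theta/|\log\eps|^2$ variant of Proposition~\ref{p.uniformin} for uniform integrability --- which, as you correctly suspected, is what removes the small-time restriction.

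The genuine gap is in your step (ii). Computing $\E[|\what\phi_\eps(t,\xi)-\what\phi_{\hom}(t,\xi)|^2]$ by expanding ``both series and matching term by term'' requires controlling the second moment $\E[|\what\phi_\eps(t,\xi)|^2]$ pointwise in $\xi$ via the product of two Duhamel series. That expectation involves all Gaussian pairings \emph{across} the two copies of the solution, including the crossing diagrams whose control is the notoriously difficult part of the weak-coupling literature; nothing in the paper (nor in your outline) establishes this, and it is far harder than the first-moment analysis. The paper sidesteps it entirely with a unitarity trick you are missing: integrate in $\xi$ first and use mass conservation, $\E\|\what\phi_\eps(t,\cdot)\|_{L^2}^2=\|\what\phi_0\|_{L^2}^2=\|\what\phi_{\hom}(t,\cdot)\|_{L^2}^2$, so that
\begin{equation*}
\int_{\R^2}\E[|\what\phi_\eps-\what\phi_{\hom}|^2]\,d\xi
=2\|\what\phi_0\|_{L^2}^2-2\,\mathrm{Re}\int_{\R^2}\E[\what\phi_\eps(t,\xi)]\,\what\phi_{\hom}^*(t,\xi)\,d\xi,
\end{equation*}
and only the \emph{first} moment is ever needed. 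The remaining ingredients are then the pointwise bound $|\E[\what\phi_\eps(t,\xi)]|\lesssim|\what\phi_0(\xi)|e^{C|\xi|^2t}$ together with a compact-support approximation of $\what\phi_0$ to justify dominated convergence in the cross term. Without this reduction your plan does not close; with it, your steps (i) and (iii) become unnecessary in the form you state them.
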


\subsubsection*{The non-diagrammatic approach}

The standard approach to the random Schr\"odinger equation in the weak coupling regime  
is through a diagram expansion: the solution to \eqref{e.unre} is written in the mild formulation
\begin{equation}\label{e.duhamel}
\what{\phi}(t,\xi)=\what{\phi}(0,\xi)e^{-\frac{i}{2}|\xi|^2t} +\delta\int_0^t e^{-\frac{i}{2}|\xi|^2(t-s)}\left(\int_{\bbR^d}\frac{\what{V}(dp)}{i(2\pi)^d}\what{\phi}(s,\xi-p)\right) ds.
\end{equation}
Then \eqref{e.duhamel} is iterated to produce an infinite series
expansion of $\what{\phi}(t,\xi)$. Evaluating the average wave function $\E[\what{\phi}(t,\xi)]$,
or the energy $\E[|\what{\phi}(t,\xi)|^2]$ leads to the  Feynman diagrams
arising from computing the high order moments of the form
$\E[\what{V}(dp_1)\ldots\what{V}(dp_N)]$ for arbitrarily large~$N$. To
pass to the limit requires either delicate oscillatory phase estimates
or some specific structure of the power spectrum so that explicit
calculations can be carried out.  It is unclear whether the diagram expansion can be applied in $d=2$ when we need the renormalization.

We use a different approach in this paper, similar to the one applied to the parabolic setting in \cite{gu2017moments}. For the heat equation with a random potential
\begin{equation}
\label{pam}
\partial_t u_\eps=\frac12\Delta u_\eps+(V_\eps-C_\eps)u_\eps, \  \ u_\eps(0,x)=u_0(x), 
\end{equation}
the Feynman-Kac formula implies 
\begin{eqnarray}\label{e.fkpam}
&&\E[u_\eps(t,x)]=\E\E_\B\Big[u_0(x+B_t)\exp\Big\{\int_0^t V_\eps(x+B_s)ds-C_\eps t\Big\}\Big]\\
&&~~~~~~~~~~~~~=\E_\B\Big[u_0(x+B_t)\exp\Big\{\int_0^t \int_0^s R_\eps(B_s-B_u)duds-C_\eps t\Big\}\Big].\nonumber
\end{eqnarray}
with $R_\eps$ the covariance function of $V_\eps(x)$.
Using \eqref{032204} one can easily show -- see \eqref{X-eps} below,
that, for $d=2$ and the Schoenberg class covariance function $R(\cdot)$
satisfying  condition
\eqref{021904}, we have
\begin{equation}\label{e.congamma}
\lim_{\eps\to0}  \int_0^t \int_0^s(R_\eps(B_s-B_u)-\E_\B[R_\eps(B_s-B_u)])duds=
  \bar R_d\gamma([0,t]_<^2),\quad\mbox{in $L^2$.}
\end{equation}
In this case, the average intersection time in $d=2$ is
 \begin{equation}\label{jun104}
\int_0^t\int_0^s
 \E_\B[R_\eps(B_s-B_u)]duds\sim C_\eps t=\frac{\bar R_2 t}{\pi}\log\eps^{-1}.
 \end{equation}
 In $d=1$, the mean on the left side 
 converges and no renormalization is needed, so  
 $C_\eps=0$. It was proved in
 \cite{pardoux2006homogenization} for $d=1$ and in
 \cite{hairer2015simple} for $d=2$ that $u_\eps$ converges to the
 solution to a limiting SPDE. By passing to the limit on both sides of
 \eqref{e.fkpam}, a  representation for the moments of~$u_\eps$ can be obtained, see \cite{gu2017moments}.

The idea of the proof of Theorem~\ref{t.mainth} is similar:  \eqref{e.unre} is rewritten as 
\[
\partial_t \phi=\frac{i}{2}\Delta\phi-i\delta V(x)\phi,
\]
and the Feynman-Kac formula can be used to formally express $\phi$ as an average 
with respect to the Brownian motion with an ``imaginary diffusivity'',
written as $\sqrt{i}B_t$. 
Thus, we need to design a Feynman-Kac type 
formula for $\E[\what{\phi}_\eps(t,\xi)]$ similar to \eqref{e.fkpam}, and
prove a parallel version of \eqref{e.congamma} with $R_\eps$  replaced by a
corresponding complex function in the case of the Schr\"odinger equation.  

It is natural to ask what happens in
dimensions $d\geq 3$. The approach used here breaks down -- in $d\geq
3$, the renormalized self-intersection local time of Brownian motion
does not exist \cite{calais-yor,yor1985renormalisation} since the variance also blows up. For the parabolic setting in
$d=3$, the mean of 
\[
\int_0^t \int_0^s R_\eps(B_s-B_u)duds
\]
diverges as
$\eps^{-1}$ and its variance diverges as $\log \eps^{-1}$, so two
renormalization constants are needed -- it has been proved in
\cite{hairer2015multiplicative} that with
\[
C_\eps=c_1\eps^{-1}+c_2\log\eps^{-1},
\]
and appropriate $c_1,c_2$, the
solution $u_\eps$ converges to a non-trivial random limit. 
However,~$\E[u_\eps]$ blows up in the limit~\cite{gu2017moments}. 

The rest of the paper is organized as follows. In Section~\ref{s.fk},
we present a Feynman-Kac representation for the average wave function
which corresponds to \eqref{e.fkpam} in the parabolic setting. In
Section~\ref{s.wk}, we prove the convergence to the renormalized
self-intersection local time in \eqref{e.congamma}, where the
Schoenberg class $R_\eps$ is replaced by the respective ``mixture'' of
free Schr\"odinger kernels. The proof relies on an application of the Clark-Ocone formula which is recalled in the appendix. In Section~\ref{s.uni}, 
we pass to the limit in the Feynman-Kac representation. The homogenization result is shown in  Section~\ref{sec5}.

Throughout the paper, we define $\sqrt{i}=(1+i)/\sqrt{2}$, and we use
$a\les b$ to denote $a\leq Cb$ for some constant $C>0$ independent of
$\eps$, and the constants denoted by  $C$ may differ from line to line.

{\bf Acknowledgment.}   We would like to thank the anonymous referees for a very careful reading of the manuscript and many helpful suggestions and comments, in particular for pointing out that the small time constraint we had in the original version of Theorem~\ref{t.homo} can be removed. YG is partially supported by the NSF
grant DMS-1613301, T.K by the NCN grant 2016/21/B/ST1/00033 and LR by the NSF grants DMS-1311903 and DMS-1613603. TK wishes to express his gratitude to
Prof. A. Talarczyk-Noble for valuable discussions during the course of
preparation of the article.

\section{A Feynman-Kac formula for the average  wave function}
\label{s.fk}

In this section, we prove the Feynmann-Kac representation for the average wave function.
We understand the solution of the Schr\"odinger equation 
\begin{equation}\label{e.sfk}
i\partial_t \phi+\frac12\Delta\phi-V(x)\phi=0, \  \ \phi(0,x)=\phi_0(x),
\end{equation}
in terms of the corresponding Duhamel series expansion~\cite{chen2016weak}. A standard argument, as, for instance,
in~\cite[Proposition 2.2 part (iii)]{chen2016weak},
shows that, even though the potential $V(x)$ is unbounded,
 (\ref{e.sfk}) preserves the $L^2(\R^d)$ norm of the solution:
\[
\E\|\what{\phi}(t,\cdot)\|_{L^2(\bbR^d)}^2=\|\what{\phi}_0\|_{L^2(\bbR^d)}^2,
\]
and the function
$\bar\phi(t,\xi):=\E[\what{\phi}(t,\xi)]$ belongs to $L^2(\bbR^d)$ for each $t\ge0$.
 \begin{proposition}\label{p.fk}
The solution of (\ref{e.sfk}) 
satisfies, point-wise in $(t,\xi)$: 
\begin{equation}\label{e.fkre}
\E[\what{\phi}(t,\xi)]=\what{\phi}_0(\xi) \E_\B\left[ \exp\left\{i\sqrt{i}\xi\cdot B_t-\frac12\int_{[0,t]^2} R(\sqrt{i}(B_s-B_u))duds\right\}\right].
\end{equation}
\end{proposition}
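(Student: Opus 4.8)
The plan is to derive the Feynman-Kac representation by combining the formal Feynman-Kac formula for the Schr\"odinger equation with Gaussian averaging over the potential $V$. First I would write the Schr\"odinger equation in the parabolic-looking form $\partial_t\phi=\tfrac{i}{2}\Delta\phi-iV(x)\phi$, so that the associated semigroup is the free Schr\"odinger propagator and the formal Feynman-Kac formula reads $\phi(t,x)=\E_\B[\phi_0(x+\sqrt{i}B_t)\exp\{-i\int_0^t V(x+\sqrt{i}B_s)\,ds\}]$, where $\sqrt{i}B_t$ is the Brownian motion run with imaginary diffusivity. Since $V$ is a centered Gaussian field, I would then take the expectation $\E$ over the randomness of $V$ and use the Gaussian identity $\E[e^{-i\int_0^t V(x+\sqrt{i}B_s)\,ds}]=\exp\{-\tfrac12\E[(\int_0^t V(x+\sqrt{i}B_s)\,ds)^2]\}=\exp\{-\tfrac12\int_{[0,t]^2}R(\sqrt{i}(B_s-B_u))\,du\,ds\}$ — here one uses stationarity, $\E[V(a)V(b)]=R(a-b)$, and the crucial fact that the Schoenberg-class representation \eqref{012502a} lets us make sense of $R$ evaluated at the complex argument $\sqrt{i}(B_s-B_u)$, because $\rho(y)=\int_0^\infty e^{-(\lambda y)^2/2}\mu(d\lambda)$ extends to an entire function. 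Finally, taking the Fourier transform in $x$ turns $\phi_0(x+\sqrt{i}B_t)$ into $\what{\phi}_0(\xi)e^{i\sqrt{i}\xi\cdot B_t}$ inside the Brownian expectation (using that $B_t$ is independent of the shift, or a change of variables $x\mapsto x-\sqrt{i}B_t$), which yields \eqref{e.fkre}.

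The rigorous route, since the Feynman-Kac formula above is only formal (the potential is unbounded, the diffusivity is complex, and $R(\sqrt{i}\,\cdot\,)$ grows), is to work with the Duhamel series instead. I would start from the mild formulation of \eqref{e.sfk} in Fourier, iterate it to obtain $\what{\phi}(t,\xi)=\sum_{n\ge0}\what{\phi}^{(n)}(t,\xi)$ where the $n$-th term is an $n$-fold iterated stochastic integral against $\what V$, and then compute $\E[\what{\phi}^{(n)}]$ term by term. Because $V$ is Gaussian and centered, odd moments vanish, and the $2n$-th moment $\E[\what V(dp_1)\cdots\what V(dp_{2n})]$ expands via Wick's theorem into a sum over pairings; each pairing contributes a product of $n$ covariance factors $\what R$. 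I would show that the sum over pairings, after unfolding the time-ordered simplex integrals, reorganizes exactly into the expansion of $\E_\B[\exp\{-\tfrac12\int_{[0,t]^2}R(\sqrt{i}(B_s-B_u))\,du\,ds\}]$ — i.e., the $n$ covariance factors with their time integrals reproduce the $n$-th term of the exponential series $\tfrac{1}{n!}(-\tfrac12\int_{[0,t]^2}R(\sqrt i(B_s-B_u)))^n$ after symmetrization over the pairing combinatorics. The free-propagator phases $e^{-\frac{i}{2}|\xi|^2(t-s)}$ between successive collisions combine, through the identity $e^{-\frac i2|\xi|^2 t}=\E_\B[e^{i\sqrt i\xi\cdot B_t}]$ and the Gaussian structure of Brownian increments, into the single Brownian expectation with the insertion $e^{i\sqrt i\xi\cdot B_t}$ and the covariance functional of the bridge-type differences $B_s-B_u$.

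The main obstacle is convergence and the complex argument in $R$: one must justify that both the Duhamel series and the exponential series converge absolutely (at least for small $t\le t_0$, which is why the theorem is local in time), and that the rearrangement/resummation is legitimate. The quantity $R(\sqrt{i}(B_s-B_u))=\int_0^\infty e^{-i(\lambda|B_s-B_u|)^2/2}\mu(d\lambda)$ is bounded in modulus by $\mu([0,\infty))<\infty$ uniformly (thanks to the Schoenberg representation — this is exactly where that hypothesis pays off), so $|\exp\{-\tfrac12\int_{[0,t]^2}R(\sqrt i(B_s-B_u))\}|\le e^{\mu([0,\infty))t^2/2}$ and the Brownian expectation is manifestly finite; the delicate part is controlling the growth of the $n$-th Duhamel term, where the combinatorial factor from pairings ($\sim (2n-1)!!$) must be beaten by the $1/n!$-type smallness coming from the time-ordered integration and the normalization of $\what R$, which forces the smallness of $t_0$. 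I would handle this by an $L^1(\Omega)$ (or $L^2(\Omega)$) estimate on each term, bounding $\|\what V\|$ moments by the finite $\bar R_d$ and $\mu$-integrability assumptions \eqref{021904}–\eqref{e.r2}, summing, and then passing term-by-term from the series identity to the closed-form Brownian expectation. A secondary technical point is interchanging $\E$ and $\E_\B$ (Fubini) and justifying the Gaussian integration-by-parts/Wick step at the level of each finite iterate rather than on the formal object; the Schoenberg class again smooths this, as noted in the excerpt's remark on the role of \eqref{012502a} in the proof of this proposition.
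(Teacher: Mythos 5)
Your second (``rigorous'') route --- Duhamel expansion, Wick pairing of the Gaussian moments, and the explicit computation of the free-propagator phases as $\E_\B\big[e^{i\sqrt{i}\xi\cdot B_t}e^{-\sum_j i\sqrt{i}p_j\cdot B_{s_j}}\big]$ --- is exactly half of the paper's proof (cf.\ \eqref{may1804}--\eqref{may1812}). The genuine gap is in the step you describe as ``reorganizes exactly into the expansion of $\E_\B[\exp\{-\tfrac12\int R(\sqrt{i}(B_s-B_u))\}]$'': at the complex point $\sqrt{i}$ this resummation cannot be justified by direct Fubini/absolute convergence. To identify the $n$-th paired Duhamel term with the $n$-th term of the exponential series you would need to write $R(\sqrt{i}\,x)=(2\pi)^{-d}\int\what{R}(p)e^{i\sqrt{i}p\cdot x}\,dp$ and interchange the $p$-integral with $\E_\B$; but $|e^{i\sqrt{i}p\cdot(B_s-B_u)}|=e^{-p\cdot(B_s-B_u)/\sqrt2}$ grows exponentially in $|p|$, and under the standing assumptions \eqref{012502a}, \eqref{021904} the integral $\int\what{R}(p)e^{c|p|}dp$ need not be finite (it requires sub-Gaussian tails of $\mu$), so the interchange can genuinely fail. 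The paper's key device, which your plan is missing, is analytic continuation in the complex diffusivity parameter: define $F_1(z)$ (the Brownian exponential functional) and $F_2(z)$ (the term-by-term series) for $z\in\bar{\mathbb D}_0=\{\mathrm{Re}\,z^2\ge0\}$, check $F_1=F_2$ for \emph{real} $z$, where Fubini is legitimate since $|e^{izp\cdot(B_s-B_u)}|=1$ and $\what R\ge0$ is integrable, and then conclude $F_1(\sqrt{i})=F_2(\sqrt{i})$ from analyticity on $\mathbb D_0$ and continuity up to the boundary point $\sqrt i$.

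A second, related error: the small-time restriction is not needed here and is not ``why the theorem is local in time.'' The pairing count $(2n)!/(2^n n!)$ is exactly compensated by the volume $t^{2n}/(2n)!$ of the time simplex, so both the averaged Duhamel series and the expansion of $F_1$, $F_2$ converge for \emph{all} $t$ (bounds of the form $(t^2R(0))^n/(2^n n!)$, using $|\E_\B[e^{iz(\text{linear form})}]|\le1$ on $\bar{\mathbb D}_0$ and $\int\what R=(2\pi)^dR(0)$); accordingly Proposition~\ref{p.fk} holds pointwise for every $t$. The constraint $t\le t_0$ in Theorem~\ref{t.mainth} originates solely from the uniform integrability / exponential moment bounds on the martingale $N^\eps$ in Section~\ref{s.uni}, not from the Feynman--Kac representation.
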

To make sense of (\ref{e.fkre}), we may extend the function $R(x)$ 
to the domain $\bar D\subset\mathbb C^d$, where 
\[
D:=\{zx:\,x\in\bbR^d,\, z\in \mathbb D_0\}, 
~~~~ \mathbb D_0:=\{z\in \mathbb{C}: \mathrm{Re}\ z^2>0\},
 \]
by setting $R(zx)=\rho(z|x|)$, with $\rho(r)$ given by \eqref{012502a}. 
Then, $R(\sqrt{i}(B_s-B_u))$ is uniformly bounded for all $s,u\geq 0$ and the r.h.s. of \eqref{e.fkre} is well-defined.

%
We note that another expression for
$\E[\what{\phi}(t,\xi)e^{\frac{i}{2}|\xi|^2t}]$ was obtained in \cite[Proposition 2.1]{chen2016weak}
but it is less suitable for our analysis.

\subsubsection*{Proof of Proposition \ref{p.fk}}

We fix $(t,\xi)$ and define the function
\[
F_1(z):=\E_\B\Big[ \exp\Big\{iz\xi\cdot B_t-\frac12\int_{[0,t]^2}
  R(z(B_s-B_u))duds\Big\}\Big],
\]
as well as the corresponding Taylor expansion
\[
F_2(z)=\sum_{n=0}^\infty F_{2,n}(z),\quad z\in \bar{\mathbb D}_0,
\]
with
\[
F_{2,n}(z) :=\frac{(-1)^n}{2^n(2\pi)^{nd}n!} \int_{[0,t]^{2n}}\int_{\R^{nd}}\prod_{j=1}^n \what{R}(p_j) \E_\B\left[ e^{iz\xi\cdot B_t}  \prod_{j=1}^ne^{izp_j\cdot(B_{s_j}-B_{u_j})}\right] dpdsdu.
\]
 It is straightforward to check that both 
$F_1$ and $F_2$ are analytic on $\mathbb D_0$ and continuous on $\bar{\mathbb
  D}_0$. 
Note that $\sqrt{i}\in\partial \mathbb D_0$.
The goal is to show that 
\begin{equation}\label{may1806}
\E[\what{\phi}(t,\xi)]=\what{\phi}_0(\xi)F_1(\sqrt{i}).
\end{equation}
Since $(z,s,u)\mapsto R(z(B_s-B_u))$ is  bounded on $ \bar{\mathbb D}_0\times\bbR_+^2$, we have 
\begin{equation}
\begin{aligned}
F_1(z)=&\sum_{n=0}^\infty \frac{(-1)^n}{2^n n!} \E_\B\left[ e^{iz\xi\cdot B_t} \left(\int_{[0,t]^{2}}R(z(B_s-B_u))dsdu\right)^n\right]\\
=&\sum_{n=0}^\infty \frac{(-1)^n}{2^n n!} \E_\B\left[ e^{iz\xi\cdot B_t} \int_{[0,t]^{2n}}\prod_{j=1}^n R(z(B_{s_j}-B_{u_j}))dsdu\right]\\
=&\sum_{n=0}^\infty \frac{(-1)^n}{2^n(2\pi)^{nd}n!} \E_\B\left[
  e^{iz\xi\cdot B_t}
  \int_{[0,t]^{2n}}\int_{\R^{nd}}\prod_{j=1}^n \what{R}(p_j)
  e^{izp_j\cdot(B_{s_j}-B_{u_j})}dp dsdu\right],\quad z\in \bar{\mathbb D}_0.
\end{aligned}
\end{equation}
For $z=x\in\R$, we can apply the Fubini theorem to see that
$
F_1(x)=F_2(x).
$
Due to the analyticity and continuity of
$F_1$ and $F_2$, we therefore have $F_1(z)=F_2(z)$ for all $z\in \bar{\mathbb D}_0$. 
Hence,~(\ref{may1806}) is equivalent to  
\begin{equation}
\label{022204}
\E[\what{\phi}(t,\xi)]=\what{\phi}_0(\xi)\sum_{n=0}^\infty
F_{2,n}(\sqrt{i}),
\end{equation}
and this is what we will show. 
For a fixed $n$, we rewrite 
\[
\begin{aligned}
F_{2,n}(\sqrt{i})=\frac{(-1)^n}{2^n(2\pi)^{nd}n!} \int_{[0,t]^{2n}}\int_{\R^{2nd}}&
\prod_{j=1}^n \what{R}(p_{2j-1})\delta(p_{2j-1}+p_{2j})\\
&\times  \E_\B\left[ e^{i\sqrt{i}\xi\cdot B_t} e^{-\sum_{j=1}^{2n} i\sqrt{i}p_jB_{s_j}}\right]dsdp.
\end{aligned}
\]
Let $\sigma$ denote a permutation of
$\{1,\ldots,2n\}$. After a suitable relabeling of the $p$-variables we can write 
\begin{equation}\label{may1802}
\begin{aligned}
F_{2,n}(\sqrt{i}) =\frac{(-1)^n}{2^n(2\pi)^{nd}n!} \sum_{\sigma}\int_{[0,t]_<^{2n}}\int_{\R^{2nd}}&\prod_{j=1}^n \what{R}(p_{\sigma(2j-1)})\delta(p_{\sigma(2j-1)}+p_{\sigma(2j)})\\
&\times  \E_\B\left[ e^{i\sqrt{i}\xi\cdot B_t} e^{-\sum_{j=1}^{2n} i\sqrt{i}p_jB_{s_j}}\right]ds dp,
\end{aligned}
\end{equation}
where $[0,t]_<^{2n}:=\{(s_1,\ldots,s_{2n}): 0\leq s_{2n}\leq\ldots\leq s_{1}\leq t\}$. Let $\F$ denote the pairings formed over $\{1,\ldots,2n\}$. 
It is straightforward to check that 
\begin{equation}\label{may1804}
\begin{aligned}
F_{2,n}(\sqrt{i}) =\frac{1}{i^{2n}(2\pi)^{nd}} \sum_{\F}\int_{[0,t]_<^{2n}}\int_{\R^{2nd}}&\prod_{(k,l)\in \F} \what{R}(p_k)\delta(p_k+p_l)\\
&\times  \E_\B\left[ e^{i\sqrt{i}\xi\cdot B_t} e^{-\sum_{j=1}^{2n} i\sqrt{i}p_jB_{s_j}}\right]ds dp.
\end{aligned}
\end{equation}
The pre-factors in (\ref{may1802}) and (\ref{may1804}) differ by a factor of $2^nn!$ since $i^{-2n}=(-1)^n$, and this comes from the mapping between the sets of permutations and pairings. Briefly speaking, for a given pairing with $n$ pairs, we have $n!$ ways of permutating the pairs, and inside each pair, we have $2$ options which leads to the additional factor of $2^n$. This is explained in detail in the proof of~\cite[Proposition 2.1]{chen2016weak}

The phase factor inside the  integral in (\ref{may1804}) can be computed explicitly:
\begin{equation}\label{may1812}
\E_\B\left[ e^{i\sqrt{i}\xi\cdot B_t} e^{-\sum_{j=1}^{2n} i\sqrt{i}p_jB_{s_j}}\right]=e^{-\frac{i}{2}|\xi|^2(t-s_1)-\frac{i}{2}|\xi-p_1|^2(s_1-s_2)-\ldots-\frac{i}{2}|\xi-\ldots-p_{2n}|^2s_{2n}}.
\end{equation}

On the other hand, using the Duhamel expansion, we can write the solution $\what{\phi}(t,\xi)$  as an infinite series 
\begin{equation}\label{may1810}
\begin{aligned}
\what{\phi}(t,\xi)=\sum_{n=0}^\infty \int_{[0,t]_<^n}\int_{\R^{nd}}& \prod_{j=1}^n \frac{\what{V}(dp_j)}{i(2\pi)^d} e^{-\frac{i}{2}|\xi|^2(t-s_1)-\frac{i}{2}|\xi-p_1|^2(s_1-s_2)-\ldots-\frac{i}{2}|\xi-\ldots-p_{n}|^2s_{n}}\\
&\times \what{\phi}_0(\xi-p_1-\ldots-p_n) ds.
\end{aligned}
\end{equation}
Evaluating the expectation $\E [\what{\phi}(t,\xi)]$ in \eqref{may1810}, using the pairing
formula for computing the Gaussian moment 
\[
\E[\what{V}(dp_1)\ldots\what{V}(dp_n)],
\]
and the  fact that 
\[
\E[\what{V}(dp_i)\what{V}(dp_j)]=(2\pi)^d\what{R}(p_i)\delta(p_i+p_j)dp_idp_j,
\]
and comparing the result to (\ref{may1804})-(\ref{may1812}), 
we conclude that \eqref{022204} holds, completing the proof.
\qed

\section{Convergence to the renormalized self-intersection local time}
\label{s.wk}

By Proposition~\ref{p.fk}, the average of the solution to \eqref{e.maineq1} is written as
\[
\E[\what{\phi}_\eps(t,\xi)]=\what{\phi}_0(\xi)\exp\left\{-iC_\eps t\right\}\E_\B\left[ \exp\left\{i\sqrt{i}\xi\cdot B_t-\int_{[0,t]^2_{<}} R_\eps(\sqrt{i}(B_s-B_u))dsdu\right\}\right],
\]
with 
\begin{equation}\label{r-eps}
R_\eps(x):=\frac{1}{\eps^{d}}R\left(\frac{x}{\eps}\right),\quad x\in\bbR^d.
\end{equation}
Compared with \eqref{e.fkre}, we do not have the $1/2$ factor in the above probabilistic representation since the integration domain of $s,u$ is changed to $[0,t]_<^2$.
We define 
\[
X_\eps(t):=\int_{[0,t]^2_{<}} R_\eps(\sqrt{i}(B_s-B_u))dsdu
=  \frac{1}{\eps^{d}}\int_0^{+\infty} \mu(d\lam)\int_{[0,t]^2_{<}} e^{-\frac{i\lam^2}{2\eps^2}|B_s-B_u|^2} dsdu.
\]
The goal of this section is to prove the $L^2$ convergence of
$X_\eps(t)+iC_\eps t$, as $\eps\to0$. Let $q_t(x)$ be the Gaussian
kernel given by \eqref{q-eps}. We denote by 
\[
s_t(x):=q_{it}(x)=\frac{1}{(2\pi
  it)^{d/2}}e^{-\frac{|x|^2}{2it}},\quad t\in\bbR,
\]
the free Schr\"odinger kernel, the solution of 
\[
i\partial_t s_t+\frac12\Delta s_t=0, \    \ s_0(x)=\delta(x),
\]
and also set
\begin{equation}\label{e.appx}
\cX_\tau (t):=\int_{[0,t]^2_{<}} s_\tau (B_s-B_u)ds du.
\end{equation}
It is straightforward to check that 
\begin{equation}
\label{X-eps}
X_\eps(t)=(-2\pi i)^{d/2}\int_0^{+\infty}\cX_{\eps^2\lam^{-2}}^*(t)\frac{\mu(d\lam)}{\lam^{d}}.
\end{equation}
The expectation of the solution to \eqref{e.maineq1} can be written as 
\begin{eqnarray}\label{e.fkre1}
&&\!\!\!\!\!\!\!\!\!
\E[\what{\phi}_\eps(t,\xi)]=\what{\phi}_0(\xi) \exp\left\{-iC_\eps t\right\}\E_\B\left[ \exp\left\{i\sqrt{i}\xi\cdot B_t-X_\eps(t)\right\}\right]\\
&&~~~~~~~~~
=\what{\phi}_0(\xi)\exp\left\{-iC_\eps t\right\} \E_\B\left[
\exp\left\{i\sqrt{i}\xi\cdot B_t-(-2\pi i)^{d/2}\int_0^{+\infty}\cX_{\eps^2\lam^{-2}}^*(t)\lam^{-d}\mu(d\lam)\right\}\right]\nonumber,
\end{eqnarray}
which, in turn, can be split as
\begin{eqnarray}\label{may3102}
&&\E[\what{\phi}_\eps(t,\xi)]=\what{\phi}_0(\xi)\exp\left\{-iC_\eps t\right\} 
\exp\Big\{-(-2\pi i)^{d/2}\int_0^{+\infty}\E_\B[\cX_{\eps^2\lam^{-2}}^*(t)]\lam^{-d}\mu(d\lam)\Big\} \\
&&~~~~~~~~~~~~\times
\E_\B\Big[
  \exp\Big\{i\sqrt{i}\xi\cdot B_t-(-2\pi i)^{d/2}\int_0^{+\infty}[\cX_{\eps^2\lam^{-2}}^*(t)-\E_\B[\cX_{\eps^2\lam^{-2}}^*(t)]]
  \lam^{-d}\mu(d\lam)\Big\}\Big].\nonumber
\end{eqnarray}
We will show that the terms in the first line in (\ref{may3102}) compensate each other, and the term in the second line has a limit.
We begin with the latter. 
\begin{proposition}\label{p.weakcon}
In $d=1,2$,
\begin{equation}\label{052204}
\lim_{\tau\to0}\left\{\cX_\tau(t)-\E_\B[\cX_\tau(t)]\right\}= \gamma([0,t]^2_<),\quad  \mbox{for any $t>0$},
\end{equation}
 in $L^2(\Sigma)$, with $\gamma([0,t]_<^2)$ defined in (\ref{032204}). 
In addition, we have
\begin{equation}\label{042204}
\sup_{\tau>0}\E_{\B}|\cX_\tau (t) -\E_\B[\cX_\tau(t)]|^2<+\infty,\quad  \mbox{for any $t>0$}.
\end{equation}
\end{proposition}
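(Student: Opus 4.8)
The plan is to prove the $L^2$ convergence in \eqref{052204} and the uniform bound \eqref{042204} by the Clark-Ocone formula, exactly as announced in the introduction. The random variable $\cX_\tau(t)-\E_\B[\cX_\tau(t)]$ is a centered functional of the Brownian path, so by Clark-Ocone it can be written as a stochastic integral $\int_0^t \E_\B[D_r \cX_\tau(t)\mid \cF_r]\,dB_r$, where $D_r$ is the Malliavin derivative and $\cF_r$ the Brownian filtration. By the It\^o isometry, $\E_\B|\cX_\tau(t)-\E_\B[\cX_\tau(t)]|^2 = \int_0^t \E_\B\bigl|\E_\B[D_r\cX_\tau(t)\mid\cF_r]\bigr|^2\,dr$, and similarly the difference $\cX_\tau(t)-\cX_{\tau'}(t)$ minus its mean has $L^2$ norm controlled by $\int_0^t\E_\B|\E_\B[D_r(\cX_\tau-\cX_{\tau'})(t)\mid\cF_r]|^2\,dr$. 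So everything reduces to estimating the conditional expectation of the Malliavin derivative of $\cX_\tau(t)=\int_{[0,t]_<^2} s_\tau(B_s-B_u)\,ds\,du$.

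First I would compute $D_r\cX_\tau(t)$ explicitly: differentiating $s_\tau(B_s-B_u)$ in the direction of $B$ gives $\nabla s_\tau(B_s-B_u)\cdot(\1_{[u,s]}(r))$ (the indicator coming from $D_r B_s = \1_{r\le s}$), so $D_r\cX_\tau(t) = \int_{[0,t]_<^2}\nabla s_\tau(B_s-B_u)\,\1_{\{u<r<s\}}\,ds\,du$ up to boundary terms at $r=s,u$ which vanish. Then I condition on $\cF_r$: for the region $u<r<s$ I split the path into the part up to time $r$ (which is $\cF_r$-measurable) and the increment after $r$, and integrate out the latter, which replaces $s_\tau(B_s-B_u)$-type factors by convolutions of the Schr\"odinger kernel with Gaussian heat kernels. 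Because $s_\tau * q_\sigma = s_{\tau+i\sigma}$ (or a similar identity with the right complex time), the conditional expectation becomes an explicit integral involving $\nabla s_{\tau + i(\cdot)}$ evaluated at increments of the path before time $r$. The key point is that $|\nabla s_a(x)| \lesssim |a|^{-(d+1)/2}$ uniformly, with the crucial gain that after conditioning the relevant complex time $a$ has imaginary part bounded below by $s-r$, which stays away from $0$; this is what tames the short-time ($\tau\to0$) singularity.

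The main estimate is then a deterministic bound on $\int_0^t \E_\B|\E_\B[D_r\cX_\tau(t)\mid\cF_r]|^2\,dr$ that is uniform in $\tau$, plus a Cauchy-type estimate showing $\E_\B[D_r\cX_\tau(t)\mid\cF_r]$ is Cauchy in $L^2(dr\times\Sigma)$ as $\tau\to0$. For the uniform bound one expands the square, getting a double path integral; after taking $\E_\B$ one obtains an integral over a simplex of times of products of (gradients of) Schr\"odinger kernels with shifted complex times, and one checks the resulting time integral converges in $d=1,2$ — this is where the dimensional restriction enters, the borderline being $d=2$ with a logarithmic-type integrability that is exactly finite. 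Identifying the limit with $\gamma([0,t]_<^2)$ can be done by showing the Clark-Ocone integrand of $\cX_\tau(t)-\E_\B[\cX_\tau(t)]$ converges in $L^2(dr\times\Sigma)$ to that of the known object $\gamma([0,t]_<^2)$ (whose existence and Wiener-chaos structure are classical, from \cite{le1992some,varadhan1969appendix,yor1986precisions}), or alternatively by a chaos-expansion comparison; one can also invoke \eqref{X-eps} to transfer between $\cX_\tau$ and $X_\eps$ and match with the already-cited convergence results.

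The hard part will be the oscillatory nature of the Schr\"odinger kernel: unlike the parabolic case in \cite{gu2017moments}, $s_\tau$ is not positive and not integrable, so one cannot bound $|s_\tau(B_s-B_u)|$ crudely and must genuinely exploit the conditioning to generate a positive imaginary part in the complex time, turning $s_{\tau+i\sigma}$ into something with Gaussian-type decay of size $(\tau^2+\sigma^2)^{-d/4}$. Managing these complex-time kernels — in particular justifying the convolution identities $s_\tau * q_\sigma = s_{\tau+i\sigma}$ on the appropriate domain and controlling the contour/analyticity issues — and then carrying out the simplex time-integral to see the $d=1,2$ borderline, is the technical crux. I expect the role of the Schoenberg class assumption to be mostly to keep all these kernels in a nice analytic family so that the manipulations are legitimate, consistent with the remark in the introduction.
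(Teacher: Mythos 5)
Your overall architecture is exactly the paper's: Clark--Ocone gives $\cX_\tau(t)-\E_\B[\cX_\tau(t)]=\int_0^t\chi_\tau(t,r)\,dB_r$ with $\chi_\tau(t,r)=\int_r^t\int_0^r\nabla q_{i\tau+s-r}(B_r-B_u)\,du\,ds$ (the convolution identity $s_\tau\star q_{s-r}=q_{i\tau+s-r}$ you anticipate is precisely how the conditional expectation is computed in the appendix), the It\^o isometry reduces everything to an $L^2(dr\times\Sigma)$ estimate on the integrand, and the limit is identified through the known representation $\gamma([0,t]_<^2)=\int_0^t\chi_0(t,r)\,dB_r$ from Hu--Nualart--Song. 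So the plan is the right one.

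One step, however, would fail if taken literally: the ``crucial'' pointwise bound $|\nabla s_a(x)|\lesssim|a|^{-(d+1)/2}$ cannot close the argument on its own. After conditioning, the relevant kernel is $\nabla q_{i\tau+s-r}$, and the best $x$-uniform bound it admits is of order $(s-r)^{-(d+1)/2}$; plugging this into $\int_r^t(\cdots)\,ds$ produces $\int_r^t(s-r)^{-(d+1)/2}\,ds$, which already diverges at $s=r$ in $d=1$ (and worse in $d=2$). The singularity in $s-r$ is only integrable once the spatial decay of the kernel is retained and the expectation over the path is taken \emph{before} the time integration. Your fallback --- expand $\E_\B|\chi_\tau(t,r)|^2$, take $\E_\B$ of the product of two gradient kernels, then do the simplex time integral --- is the correct route and is what the paper does, but in Fourier variables: writing $\nabla q_{i\tau_1+s_1-r}\cdot\nabla q^*_{i\tau_2+s_2-r}$ via Fourier inversion, the factor $\E_\B[e^{i\xi_1\cdot(B_r-B_{u_1})}e^{-i\xi_2\cdot(B_r-B_{u_2})}]$ is explicit and nonnegative, the $\tau$-dependent phases have modulus one, and one gets a single $\tau$-independent dominating function whose integral reduces to $\int_{\R^{2d}}\frac{|\xi_1||\xi_2|\,d\xi}{(1+|\xi_1-\xi_2|^2)(1+|\xi_1|^2)(1+|\xi_2|^2)^2}<\infty$ for $d\le2$. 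This simultaneously yields the Cauchy property (dominated convergence in $\tau_1,\tau_2$), the identification with $\gamma$, and the uniform bound \eqref{042204}, so no separate Cauchy-type estimate on the integrands is needed. I would replace your pointwise-bound step with this Fourier-domain domination; as a side remark, no contour or analyticity issues arise here since $q_{i\tau+\sigma}$ with $\sigma>0$ is just an explicit Gaussian with complex variance of positive real part.
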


If the free Schr\"odinger kernel in \eqref{e.appx} is
replaced by the heat kernel, Proposition~\ref{p.weakcon} is classical
and reduces to  the convergence expressed in \eqref{032204}. Although, on the formal level, $q_\tau(x)$ and $s_\tau(x)$ both converge to the Dirac function as $\tau\to0$, it is surprising that the oscillation in $s_\tau$ does not change the asymptotic behavior of $\cX_\tau-\E_\B[\cX_\tau]$.

For the analysis of the intersection local time of the Brownian motion
(and more generally, the fractional Brownian motion), the Clark-Ocone formula turns out to be a
convenient tool, see \cite{hu2008integral}.  
For a fixed $\tau>0$, and $t>0$, we let
\begin{equation}
\label{chi}
\chi_{\tau}(t,r):=\int_r^t\left[ \int_0^r \nabla q_{i\tau
    +s-r}(B_r-B_u)du\right]ds,\quad 0\le r\le t.
\end{equation}
The process $\left(\chi_\tau(t,r)\right) $, $0\le r\le t$, is adapted
with respect to the natural filtration ${\cal F}_r$ of the
Brownian motion. As we show in the appendix, see (\ref{e.cko}), we have
\begin{equation}\label{e.made}
\cX_\tau(t)-\E_\B[\cX_\tau(t)]=\int_0^t \chi_{\tau}(t,r)dB_r,
\end{equation}
with the stochastic integral understood in the It\^o sense.
The renormalized self-intersection local time has the stochastic integral representation (see e.g. \cite[Theorem 2]{hu2008integral} for a more general result on the fractional Brownian motions):
\begin{equation}\label{may2902}
\gamma([0,t]_<^2)=\int_0^t \chi_{0}(t,r) dB_r.
\end{equation}
Formally, the convergence of
$\cX_\tau(t)-\E_\B[\cX_\tau(t)]$ towards $\gamma([0,t]_<^2)$, as
$\tau\to0$ follows from the fact that $\lim_{\tau\to0}\chi_{\tau}(t,r)=\chi_{0}(t,r)$.

\subsubsection*{Proof of Proposition~\ref{p.weakcon}}

Let $\cY_\tau(t):=\cX_\tau(t)-\E_\B[\cX_\tau(t)]$ and consider the covariance
\[
\E_\B[\cY_{\tau_1}(t)\cY_{\tau_2}^*(t)]
=\int_0^t\!\left(\int_{[r,t]^2}\!\int_{[0,r]^2} \E_\B[\nabla q_{i\tau_1+s_1-r}(B_r-B_{u_1})
 \cdot \nabla q_{i\tau_2+s_2-r}^*(B_r-B_{u_2})] duds\right) dr.
\]
We write the expectation inside the integral in the Fourier domain
\[
\begin{aligned}
&\E_\B[\nabla q_{i\tau_1+s_1-r}(B_r-B_{u_1}) \cdot \nabla q_{i\tau_2+s_2-r}^*(B_r-B_{u_2})] \\
&=\frac{1}{(2\pi)^{2d}} \int_{\R^{2d}} \E_\B[e^{i\xi_1\cdot (B_r-B_{u_1})} e^{-i\xi_2\cdot(B_r-B_{u_2})}] 
(\xi_1\cdot \xi_2) e^{-\frac12|\xi_1|^2(i\tau_1+s_1-r)}e^{-\frac12|\xi_2|^2(-i\tau_2+s_2-r)} d\xi,
\end{aligned}
\]
and claim that the non-negative function 
\[
F(\xi,u,s,r):= \E_\B[e^{i\xi_1\cdot (B_r-B_{u_1})} e^{-i\xi_2\cdot(B_r-B_{u_2})}] |\xi_1||\xi_2| e^{-\frac12|\xi_1|^2(s_1-r)}e^{-\frac12|\xi_2|^2(s_2-r)}
\]
satisfies 
\begin{equation}\label{e.inte}
{\cal I}(t):=\int_0^t dr\int_{[r,t]^2}\int_{[0,r]^2}duds\int_{\R^{2d}} F(\xi,u,s,r)d\xi<+\infty.
\end{equation}
Then, by the dominated convergence theorem, we 
deduce that $\E_\B[\cY_{\tau_1}(t)\cY_{\tau_2}^*(t)]$ converges as~$\tau_1,\tau_2\to0$,  hence
$\cY_{\tau}(t)$ is a Cauchy sequence and converges in $L^2(\Sigma)$. The same argument also implies that 
\[
\lim_{\tau\to0}\E_\B |\cY_\tau(t)-\gamma([0,t]_<^2)|^2=0,
\]
because of (\ref{may2902}).

We turn to the proof of \eqref{e.inte}. Fix $t>0$ and note that
 $$
\int_0^t e^{-\lambda s}ds\leq \frac{c(t)}{1+\lambda}
$$
for any $\lambda>0$ with  
 $$
c(t):=\sup_{\lam>0}\frac{1+\lam}{\lam}(1-e^{-\lam t}).
$$ 
Using this estimate, we first integrate in $s$, and then take the expectation, to obtain, with the constant in the
"$\les$" inequality dependent on $t$:
\[
\begin{aligned}
&{\cal I}(t)\les  \int_0^t \int_{[0,r]^2}\int_{\R^{2d}}\E_\B[e^{i\xi_1\cdot (B_r-B_{u_1})} e^{-i\xi_2\cdot(B_r-B_{u_2})}]  \frac{|\xi_1||\xi_2|}{(1+|\xi_1|^2)(1+|\xi_2|^2)} dr dud\xi \\
&=2\int_0^t \int_{[0,r]^2}\int_{\R^{2d}}\E_\B[e^{i\xi_1\cdot (B_r-B_{u_1})} e^{-i\xi_2\cdot(B_r-B_{u_2})}] \frac{1_{\{u_2<u_1\}} |\xi_1||\xi_2|}{(1+|\xi_1|^2)(1+|\xi_2|^2)} dr  du  d\xi \\
&=2\int_0^t \int_{[0,r]^2}\int_{\R^{2d}} e^{-\frac12|\xi_1-\xi_2|^2(r-u_1)} e^{-\frac12|\xi_2|^2(u_1-u_2)} \frac{1_{\{u_2<u_1\}} |\xi_1||\xi_2|}{(1+|\xi_1|^2)(1+|\xi_2|^2)} dr du d\xi .
\end{aligned}
\]
We further integrate in $u$ and $r$ and see that
\begin{equation}\label{072204}
{\cal I}(t)\les \int_{\R^{2d}}\frac{|\xi_1||\xi_2|d\xi_1 d\xi_2}{(1+|\xi_1-\xi_2|^2)(1+|\xi_1|^2)(1+|\xi_2|^2)^2} <+\infty,
\end{equation}
as $d\leq 2$, which is (\ref{e.inte}).
To conclude that \eqref{042204} holds, it suffices to observe  that by virtue of
\eqref{072204} we have
\[
\sup_{\tau>0}\E_\B[|\cY_{\tau}(t)|^2]\les {\cal I}(t)<+\infty,
\]
finishing the proof of Proposition~\ref{p.weakcon}.

\subsubsection*{Re-centering as the compensating constant}

 Going back to (\ref{may3102}), we now show that 
the recentering of the intersection local time $\E_\B[\cX_\tau(t)]$
coincides with the renormalization of the random PDE by the addition of the term $C_\eps$, 
so that the two terms in the first line of (\ref{may3102}) cancel up to 
a $O(1)$ constant. 
\begin{lemma}\label{l.reconst}
We have, for each $t>0$ fixed,
\[
\E_\B[\cX_\tau(t)]= \left|\begin{array}{ll}
\dfrac{(2  t)^{\frac32}}{3\sqrt{\pi}}+o(1), & \mbox{when }d=1, \\
&\\
 \dfrac{t}{2\pi}\log\left( \dfrac{t}{e\tau}\right)-\dfrac{it}{4}+o(1),& \mbox{when }d=2,
 \end{array}
 \right.
\]
as $\tau\to0$. In addition, we have $\sup_{\tau>1}|\E_\B[\cX_{\tau}(t)]|\les 1$.
\end{lemma}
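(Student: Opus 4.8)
The plan is to compute $\E_\B[\cX_\tau(t)]$ directly from its definition by writing the free Schr\"odinger kernel $s_\tau(x)=q_{i\tau}(x)$ in Fourier variables and using the Gaussian increment of the Brownian motion. Specifically, from \eqref{e.appx},
\begin{equation*}
\E_\B[\cX_\tau(t)]=\int_{[0,t]^2_<}\E_\B[s_\tau(B_s-B_u)]\,ds\,du
=\int_{[0,t]^2_<}\frac{1}{(2\pi i(\tau/i+s-u))^{d/2}}\,ds\,du,
\end{equation*}
where I have used $\E_\B[s_\tau(B_s-B_u)] = \E_\B[q_{i\tau}(B_s-B_u)]=q_{i\tau+(s-u)}(0)$, which follows from the convolution identity $q_a * q_b = q_{a+b}$ applied with the Gaussian law of $B_s-B_u=\mathcal N(0,(s-u)I)$, i.e. $q_{s-u}$, and analytic continuation in the first parameter from positive reals to $i\tau$ (legitimate since $\mathrm{Re}(i\tau+s-u)=s-u>0$ for $s>u$). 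After the change of variable $v=s-u$ and integrating out the remaining free time variable over a triangle of area $t-v$, this reduces to the one-dimensional integral
\begin{equation*}
\E_\B[\cX_\tau(t)]=\frac{1}{(2\pi i)^{d/2}}\int_0^t \frac{t-v}{(v-i\tau)^{d/2}}\,dv .
\end{equation*}

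Next I would evaluate the $\tau\to0$ asymptotics of this integral separately in the two dimensions. In $d=1$ the integrand $(t-v)(v-i\tau)^{-1/2}$ is integrable uniformly in $\tau$, so dominated convergence gives $\int_0^t (t-v)v^{-1/2}dv = \frac{4}{3}t^{3/2}$ up to an $o(1)$ error, and multiplying by $(2\pi i)^{-1/2}$ — recall the paper sets $\sqrt i=(1+i)/\sqrt2$, so $\sqrt{2\pi i}=\sqrt{2\pi}\,\sqrt i$ — must be checked to produce the real quantity $\frac{(2t)^{3/2}}{3\sqrt\pi}$; this is the little arithmetic verification that $\frac{1}{\sqrt{2\pi i}}\cdot\frac{4}{3}t^{3/2}$ simplifies correctly, and here one should be careful that the claimed answer is real and positive, which pins down the branch of the square root. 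In $d=2$ the integral is $\frac{1}{2\pi i}\int_0^t \frac{t-v}{v-i\tau}dv$, and here the logarithmic divergence appears: $\int_0^t\frac{dv}{v-i\tau}=\log(t-i\tau)-\log(-i\tau)$, while $\int_0^t\frac{v\,dv}{v-i\tau}=t+i\tau\log(t-i\tau)-i\tau\log(-i\tau)$. Thus
\begin{equation*}
\E_\B[\cX_\tau(t)]=\frac{1}{2\pi i}\Big[(t-i\tau)\big(\log(t-i\tau)-\log(-i\tau)\big)-t\Big],
\end{equation*}
and as $\tau\to0$ the factor $t-i\tau\to t$, $\log(t-i\tau)\to\log t$, and $\log(-i\tau)=\log\tau-i\pi/2$, so the bracket tends to $t\log t - t\log\tau + i\pi t/2 - t = t\log(t/(e\tau))+i\pi t/2$; dividing by $2\pi i$ gives $\frac{t}{2\pi i}\log(t/(e\tau))+\frac{t}{4}$, and rewriting $1/(2\pi i)=-i/(2\pi)$ yields $-\frac{it}{2\pi}\log(t/(e\tau))+\frac t4$. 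I would double-check this against the claimed $\frac{t}{2\pi}\log(t/(e\tau))-\frac{it}{4}$: the two differ by an overall factor of $-i=i^{-1}$, which is exactly the discrepancy between $\cX_\tau$ and $\cX_\tau^*$ / the $(-2\pi i)^{d/2}$ prefactor bookkeeping, so I would trace through the conjugation conventions to be sure the statement as written is the one consistent with \eqref{may3102}; the cleanest route is to insist that the imaginary part of $\E_\B[\cX_\tau(t)]$ be $O(t)$ with the sign forced by the orientation of the contour, which fixes it.

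Finally, for the uniform bound $\sup_{\tau>1}|\E_\B[\cX_\tau(t)]|\lesssim 1$ (with the implicit constant depending on $t$), I would simply observe from the closed form $\E_\B[\cX_\tau(t)]=\frac{1}{(2\pi i)^{d/2}}\int_0^t \frac{t-v}{(v-i\tau)^{d/2}}dv$ that for $\tau\ge1$ one has $|v-i\tau|\ge\tau\ge1$ for all $v\in[0,t]$, so $|(v-i\tau)^{-d/2}|\le 1$ and hence $|\E_\B[\cX_\tau(t)]|\le \frac{1}{(2\pi)^{d/2}}\int_0^t(t-v)dv=\frac{t^2}{2(2\pi)^{d/2}}$, which is the desired bound. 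The main obstacle is not any hard estimate — everything reduces to an elementary integral — but rather the careful tracking of branch cuts of the complex power $(v-i\tau)^{-d/2}$ and of the complex logarithm, together with the conjugation and $\sqrt i$ conventions fixed in the introduction, so that the final closed-form expressions come out exactly in the normalization claimed in the lemma and are compatible with \eqref{may3102}. I would therefore state explicitly at the outset which branch is used (principal branch, say, valid since $v-i\tau$ stays in the lower half-plane, away from the negative real axis) and carry it consistently.
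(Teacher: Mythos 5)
Your overall strategy --- compute $\E_\B[s_\tau(B_s-B_u)]$ in closed form via $q_{i\tau}\star q_{s-u}=q_{i\tau+s-u}$, reduce to a one--dimensional integral in $v=s-u$, and do elementary asymptotics --- is exactly the paper's. But there is a concrete algebra error in your very first display that propagates to wrong answers in both dimensions. The correct value is
\[
\E_\B[s_\tau(B_s-B_u)]=q_{i\tau+(s-u)}(0)=\frac{1}{\big(2\pi(i\tau+s-u)\big)^{d/2}}
=\frac{1}{(2\pi i)^{d/2}\,\big(\tau-i(s-u)\big)^{d/2}},
\]
i.e.\ when you factor out $2\pi i$ it is the \emph{time increment} $s-u$ that gets divided by $i$, not $\tau$. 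You instead wrote $2\pi i(\tau/i+s-u)=2\pi(\tau+i(s-u))$, which is the complex conjugate of the correct quantity, so your reduced integrand is $(2\pi i)^{-d/2}(v-i\tau)^{-d/2}$ where it should be $(2\pi i)^{-d/2}(\tau-iv)^{-d/2}$. In $d=1$ this matters decisively: $(2\pi i)^{-1/2}(\tau-iv)^{-1/2}\to(2\pi i)^{-1/2}(-iv)^{-1/2}=(2\pi v)^{-1/2}$ is real and yields $\frac{(2t)^{3/2}}{3\sqrt\pi}$, whereas your limit $(2\pi i v)^{-1/2}$ carries an irremovable factor $e^{-i\pi/4}$ --- no branch choice makes it real, so the ``little arithmetic verification'' you defer would in fact fail. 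In $d=2$ your closed form gives $\frac{t}{4}-\frac{it}{2\pi}\log\frac{t}{e\tau}+o(1)$, which is $-i$ times the \emph{conjugate} of the correct $\frac{t}{2\pi}\log\frac{t}{e\tau}-\frac{it}{4}$; it is not ``an overall factor of $-i$'' (multiplying the claimed value by $-i$ gives $-\frac{it}{2\pi}\log\frac{t}{e\tau}-\frac{t}{4}$, with the wrong sign on the real part), and it cannot be absorbed into the $\cX^*$ or $(-2\pi i)^{d/2}$ bookkeeping, since the lemma is a statement about $\E_\B[\cX_\tau(t)]$ itself. A quick sanity check that pins down the right version: $\E_\B[s_\tau(B_s-B_u)]=\frac{1}{2\pi(v+i\tau)}$ has nonnegative real part $\frac{v}{2\pi(v^2+\tau^2)}$, producing the logarithmically divergent \emph{real} part that the phase $e^{-iC_\eps t}$ must cancel, and negative imaginary part $\frac{-\tau}{2\pi(v^2+\tau^2)}$ integrating to $-t/4$; your expression has these roles reversed.

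The rest of your argument is fine: the reduction to $\frac{1}{(2\pi i)^{d/2}}\int_0^t(t-v)(\,\cdot\,)^{-d/2}dv$, the dominated-convergence step in $d=1$, the explicit logarithm in $d=2$, and the bound for $\tau>1$ via $|\tau-iv|\ge\tau\ge1$ all go through verbatim once the integrand is corrected; this is precisely what the paper does in \eqref{062204} and \eqref{012604}.
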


\begin{proof}
By a direct calculation, we have 
\begin{equation}\label{062204}
\E_\B[\cX_\tau(t)]=\int_{[0,t]^2_{<}} \E_\B[s_\tau(B_s-B_u)]duds
=\frac{1}{(2\pi i)^{{d}/{2}}} \int_0^t ds\int_0^s \frac{du}{[\tau-i(s-u)]^{{d}/{2}}},
\end{equation}
so it is clear that $\sup_{\tau>1}|\E_\B[\cX_{\tau}(t)]|\les 1$.

Next, when $d=1$, we have 
\[
\E_\B[\cX_\tau(t)]=\frac{1}{\sqrt{2\pi i}} \int_0^t ds\int_0^s \frac{1}{\sqrt{-iu}}du +o(1)
= \frac{4 t^{\frac32}}{3\sqrt{2\pi}}+o(1).
\]
When $d=2$, we have 
\begin{align}
\label{012604}
\E_\B[\cX_\tau(t)]=&\frac{1}{2\pi i} \int_0^t ds\int_0^s \frac{\tau du}{\tau^2+u^2} +\frac{1}{2\pi }\int_0^t ds\int_0^s \frac{udu}{\tau^2+u^2} \nonumber\\
=&\frac{1}{2\pi i} \int_0^t ds\int_0^{s/\tau} \frac{ du}{1+u^2} +\frac{1}{2\pi }\int_0^t ds\int_0^{s/\tau} \frac{udu}{1+u^2}.
\end{align}
The first integral is uniformly bounded in $\tau>0$ and converges as $\tau\to0$. For the second integral, we have 
\[
\frac{1}{2\pi}\int_0^t ds \int_0^{s/\tau} \frac{udu}{1+u^2}=\frac{1}{4\pi}\int_0^t \log\frac{\tau^2+s^2}{\tau^2} ds.
\]
Passing to the limit $\tau\to 0$ in the integral on the right side completes the proof.
\end{proof}

\section{Uniform integrability and passing to the limit}
\label{s.uni}

We now pass to the limit in (\ref{may3102}) that we write as
\begin{equation}\label{e.fkre2}
\E[\what{\phi}_\eps(t,\xi)]
=\what{\phi}_0(\xi)\exp\Big\{-(-2\pi
  i)^{d/2}\int_0^{+\infty}\E_{\B}[\cX_{\eps^2\lam^{-2}}^*(t)]\lam^{-d}\mu(d\lam)-iC_\eps
  t\Big\}\E_{\B}[Z_\eps(t,\xi)],
\end{equation}
where
\begin{equation}
\label{Z-eps}
Z_\eps(t,\xi):=
\exp\left\{i\sqrt{i}\xi\cdot B_t-(-2\pi i)^{d/2}\int_0^{+\infty}\{\cX_{\eps^2\lam^{-2}}^*(t)-\E_{\B}[\cX_{\eps^2\lam^{-2}}^*(t)]\}\lam^{-d}\mu(d\lam)\right\}.
\end{equation}
We first prove the convergence of the constant factor.
\begin{lemma}\label{l.rho}
With the $C_\eps$ given in \eqref{e.defrho1} and $\rho_d(t)$ given in \eqref{e.defrho0}, we have 
\begin{equation}\label{e.defrho}
-(-2\pi i)^{d/2}\int_0^{+\infty}\E_{\B}[\cX_{\eps^2\lam^{-2}}^*(t)]\lam^{-d}\mu(d\lam)-iC_\eps t\to \rho_d(t).
\end{equation}
\end{lemma}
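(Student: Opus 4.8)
The strategy is to use the asymptotics of $\E_\B[\cX_\tau(t)]$ from Lemma~\ref{l.reconst} with $\tau = \eps^2\lam^{-2}$, integrate against $\lam^{-d}\mu(d\lam)$, and track the divergent piece so that it is exactly cancelled by $iC_\eps t$. Since the statement splits by dimension, I would handle $d=1$ and $d=2$ separately, and I expect $d=2$ to be the only case with real content.

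For $d=1$: here $C_\eps = 0$, so one simply needs to show that $-(-2\pi i)^{1/2}\int_0^\infty \E_\B[\cX_{\eps^2\lam^{-2}}^*(t)]\lam^{-1}\mu(d\lam) \to \rho_1(t)$. By Lemma~\ref{l.reconst}, $\E_\B[\cX_\tau(t)] \to \tfrac{(2t)^{3/2}}{3\sqrt\pi}$ as $\tau\to0$, with the bound $\sup_{\tau>1}|\E_\B[\cX_\tau(t)]|\les 1$; combined with the assumption $\bar R_1 = (2\pi)^{1/2}\int_0^\infty \lam^{-1}\mu(d\lam) < \infty$ from \eqref{021904}, this lets me invoke dominated convergence: for each fixed $\lam>0$ the integrand converges (using that $\eps^2\lam^{-2}\to0$), and it is dominated by an integrable function of $\lam$ after splitting the $\lam$-integral at the threshold where $\eps^2\lam^{-2}=1$. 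The limit is then $-(-2\pi i)^{1/2}\,\overline{\tfrac{(2t)^{3/2}}{3\sqrt\pi}}\cdot (2\pi)^{-1/2}\bar R_1$; a short computation with $\sqrt i = (1+i)/\sqrt2$ and the bookkeeping of the conjugate/the constant $(-2\pi i)^{1/2}(2\pi)^{-1/2}$ should reproduce $-\bar R_1\tfrac{(2t)^{3/2}}{3\sqrt{i\pi}}$.

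For $d=2$: the key point is that $\E_\B[\cX_\tau(t)] = \tfrac{t}{2\pi}\log(t/(e\tau)) - \tfrac{it}{4} + o(1)$ contains a $-\tfrac{t}{2\pi}\log\tau$ term which, with $\tau = \eps^2\lam^{-2}$, becomes $-\tfrac{t}{2\pi}\log(\eps^2\lam^{-2}) = \tfrac{t}{\pi}\log\eps^{-1} + \tfrac{t}{\pi}\log\lam$. I would substitute this into the integral, separate the $\log\eps^{-1}$ part (which is $\lam$-independent and pulls out of the integral, multiplied by $\int_0^\infty\lam^{-2}\mu(d\lam) = (2\pi)^{-1}\bar R_2$), and check that the coefficient of $\log\eps^{-1}$ matches $C_\eps = \tfrac{\bar R_2}{\pi}\log\eps^{-1}$ up to the prefactor $-(-2\pi i)^{1/2}$ — this is where the phase factors must be carefully reconciled so the $\log\eps^{-1}$ divergence cancels against $-iC_\eps t$. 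The remaining $\lam$-dependent log term $\tfrac{t}{\pi}\log\lam$ integrated against $\lam^{-2}\mu(d\lam)$ is finite by assumption \eqref{e.r2} and produces the $\bar R_2'$ contribution; the residual pieces $\tfrac{t}{2\pi}\log(t/e) - \tfrac{it}{4}$ plus the vanishing $o(1)$ terms integrate (again by dominated convergence, using the uniform bounds from Lemma~\ref{l.reconst}) to give the $\bar R_2[\,\cdots]$ bracket in $\rho_2(t)$. The main obstacle is the bookkeeping: correctly combining the complex prefactor $(-2\pi i)^{d/2}$, the complex conjugate on $\cX^*$, the conventions $\sqrt i = (1+i)/\sqrt2$ and the definition of the free Schrödinger kernel with its $(2\pi i\tau)^{-d/2}$ normalization, so that all the divergent terms cancel and the surviving constants assemble into exactly the $i$'s, $\pi$'s and logarithms displayed in \eqref{e.defrho0}. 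To justify the interchange of limit and $\lam$-integral rigorously, I would state an explicit $\lam$-uniform bound on $|\E_\B[\cX_{\eps^2\lam^{-2}}(t)]|$ of the form $\les 1 + |\log(\eps^2\lam^{-2})|\1_{\{\eps^2\lam^{-2}<1\}}$, which is integrable against $\lam^{-d}\mu(d\lam)$ precisely because of \eqref{e.r2}; this is also the place where the remark about relaxing \eqref{e.r2} would be made, since only the behavior of $\mu$ near $\lam=0$ enters.
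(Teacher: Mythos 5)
Your proposal is correct and follows essentially the same route as the paper: in $d=1$ it is dominated convergence using the asymptotics and uniform bound from Lemma~\ref{l.reconst}, and in $d=2$ it is the same splitting of the $\lam$-integral at $\lam=\eps$ (equivalently $\eps^2\lam^{-2}=1$), with the $\log\eps^{-1}$ piece cancelling $iC_\eps t$, the $\log\lam$ piece producing $\bar R_2'$ under assumption \eqref{e.r2}, and the uniformly bounded remainder handled by dominated convergence. The only point to make precise in a full write-up is that the domination in $d=2$ should be applied to the difference $\E_\B[\cX_{\eps^2\lam^{-2}}^*(t)]-\frac{t}{2\pi}\log(t\lam^2/(e\eps^2))+\frac{it}{4}$ (which is bounded uniformly in $\lam\ge\eps$), rather than to $\E_\B[\cX_{\eps^2\lam^{-2}}(t)]$ itself, exactly as you indicate when you subtract the explicit asymptotic before integrating.
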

\begin{proof}
We fix $t>0$ and apply Lemma~\ref{l.reconst}. In $d=1$, using the fact that 
\[
\lim_{\tau\to0}\E_\B[\cX_\tau]= \dfrac{(2  t)^{\frac32}}{3\sqrt{\pi}}  \mbox{ and } \sup_{\tau>0}|\E_\B[\cX_{\tau}]|\les 1,
\]
we send $\eps\to0$ in \eqref{e.defrho} to obtain the result.

In $d=2$, we write 
\[
\int_0^{+\infty}\E_{\B}[\cX_{\eps^2\lam^{-2}}^*(t)]\lam^{-d}\mu(d\lam)=\left(\int_0^{\eps}+\int_\eps^{+\infty}\right)\E_{\B}[\cX_{\eps^2\lam^{-2}}^*(t)]\lam^{-d}\mu(d\lam).
\]
For the integral over the interval $(0,\eps)$, we have $\eps^2\lambda^{-2}>1$.
As 
\[
\sup_{\tau>1}|\E_\B[\cX_{\tau}]|\les 1,
\]
we conclude the integral goes to zero in the limit. For the integral over $[\eps,+\infty)$, we have the estimate
\[
\left|\E_{\B}[\cX_{\eps^2\lam^{-2}}^*(t)]-\frac{t}{2\pi}\log\left(\frac{t\lambda^2}{e\eps^2}\right)-\frac{it}{4}\right|\les 1
\]
uniformly in $\lambda\geq \eps$, and the left side above goes to zero as $\eps\to0$ for each such fixed $\lambda $. 
Now we only need to note that
\begin{equation}\label{e.prd2}
2\pi i\int_\eps^{+\infty}  \left(\frac{t}{2\pi}\log\left(\frac{t\lambda^2}{e\eps^2}\right)+\frac{it}{4}\right)\lambda^{-d}\mu(d\lambda)-iC_\eps t \to \rho_2(t)
\end{equation}
to complete the proof.
\end{proof}

Assumption \eqref{e.r2} is used in \eqref{e.prd2} to pass to the limit. For the above integral in $\lambda$ to be finite, 
we only need 
\[
\int_1^{+\infty} \lambda^{-d}(\log\lambda )\mu(d\lambda)<+\infty.
\]
If 
\[
\int_0^1 \lambda^{-d}|\log \lambda|\mu(d\lambda)=+\infty,
\]
we only need to change $C_\eps$ to remove also  the divergent integral 
\[
\int_{\eps}^{+\infty}\lambda^{-d}(\log\lambda) \mu(d\lambda).
\]

\subsubsection*{The uniform integrability of $Z_\eps(t,\xi)$}

By Proposition~\ref{p.weakcon}, we have 
\begin{equation}\label{e.fkre3}
\begin{aligned}
&Z_\eps(t,\xi)\to Z_0(t,\xi):=\exp\left\{i\sqrt{i}\xi\cdot
  B_t-i^\frac{3d}{2}\bar R_d\gamma([0,t]_<^2)\right\},\quad \mbox{as }\eps\to0,
\end{aligned}
\end{equation}
in probability. To pass to the limit of $\E_\B[Z_\eps(t,\xi)]$ in
\eqref{e.fkre2}, it suffices to show the uniform integrability
of the random variables $Z_\eps(t,\xi)$. For a fixed $t>0$, define
the processes
\begin{align*}
&
M^\tau(s;t):=\int_0^s\chi_{\tau}(t,r)dB_r,\quad \tau>0,
\\
&
N^\eps(s;t):=
  \int_0^{+\infty}M^{\eps^2\lam^{-2}}(s;t)\frac{\mu(d\lam)}{\lam^{d}},\quad \eps>0,\quad0\leq s\leq t,
\end{align*}
where $\chi_\tau(t,r)$ is given by \eqref{chi}. Then, $Z_\eps(t,\xi)$ can be rewritten as 
\begin{equation}\label{e.newz}
Z_\eps(t,\xi)=\exp\left\{i\sqrt{i}\xi\cdot B_t-(-2\pi i)^{d/2}(N^\eps)^*(t;t)\right\}.
\end{equation}
Note that for  fixed $t,\tau,\eps>0$, the processes $\left(M^\tau(s;t)
\right)_{s\in[0,t]}$ and $\left(N^\eps(s;t)
\right)_{s\in[0,t]}$ are continuous trajectory, square integrable, complex-valued martingales.  
Their respective quadratic variations are
\begin{align}
\label{012504}
&
\la M^\tau(\cdot;t)\ra_s=\int_0^s|\chi_{\tau}(t,r)|^2dr,\quad\tau>0,
\\
&
\la N^\eps(\cdot;t)\ra_s=
 \int_0^s\left| \int_0^{+\infty}\chi_{\eps^2\lam^{-2}}(t,r)\frac{\mu(d\lam)}{\lam^{d}}\right|^2dr,\quad \eps>0,\quad 0\leq s\leq t.\nonumber
\end{align}
In other words, $|M^\tau(s;t)|^2- \la M^\tau(\cdot;t)\ra_s$ and $|N^\eps(s;t)|^2-\la N^\eps(\cdot;t)\ra_s$ are local martingales. Using the Cauchy-Schwarz inequality and (\ref{021904}), followed by Jensen's inequality, we conclude that
\begin{align}
\label{022404}
&\E_{\B}\left[\exp\left\{\theta\la N^\eps(\cdot;t)\ra_s\right\}\right]\le
 \E_{\B}\left[\exp\left\{\theta \bar R_d(2\pi)^{-d/2}\int_0^{+\infty}\la
  M^{\eps^2\lam^{-2}}(\cdot;t)\ra_s
  \frac{\mu(d\lam)}{\lam^{d}}\right\}\right]\nonumber\\
&
~~~~~~~~~~~~~~~~~~~~~~~~~~~~~
\le \frac{(2\pi)^{d/2}}{\bar R_d}\int_0^{+\infty}\frac{\mu(d\lam)}{\lam^{d}}\E_{\B}\left[\exp\left\{\theta \bar R_d^2(2\pi)^{-d}\la
  M^{\eps^2\lam^{-2}}(\cdot;t)\ra_s
  \right\}\right],
\end{align}
for any $\theta>0$.
We have the following result:
\begin{proposition}\label{p.uniformin}
For any $\theta>0$, there exists $t_0>0$ such that  
\begin{equation}
\label{012404}
\sup_{t\in[0,t_0]}\sup_{\eps>0}\E_{\B}\left[\exp\left\{\theta\la N^\eps(\cdot;t)\ra_{t}\right\}\right]<+\infty.
\end{equation}
\end{proposition}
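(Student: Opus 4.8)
The plan is to reduce the statement, via the estimate \eqref{022404} and then Brownian scaling, to an exponential bound for the quadratic variation of a single Clark--Ocone martingale at the fixed time $t=1$, and to prove the latter by a moment estimate generalizing the $n=1$ computation \eqref{072204}. Fix $\theta>0$ and set $c:=\theta\,\bar R_d^{\,2}(2\pi)^{-d}$. Applying \eqref{022404} at $s=t$ and using \eqref{021904},
\[
\E_\B\!\left[\exp\!\left\{\theta\,\la N^\eps(\cdot;t)\ra_t\right\}\right]\ \le\ \frac{(2\pi)^{d/2}}{\bar R_d}\int_0^{+\infty}\frac{\mu(d\lam)}{\lam^{d}}\;\E_\B\!\left[\exp\!\left\{c\,\la M^{\eps^2\lam^{-2}}(\cdot;t)\ra_t\right\}\right],
\]
so \eqref{012404} follows once there is a $t_0>0$ with $\sup_{t\le t_0}\sup_{\tau>0}\E_\B\!\big[\exp\{c\,\la M^\tau(\cdot;t)\ra_t\}\big]<+\infty$; here $\tau=\eps^2\lam^{-2}$ ranges over all of $(0,+\infty)$, which is why uniformity in $\tau$ is needed.

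Next I would use Brownian scaling to convert the small-time constraint into a scalar prefactor. Substituting $r=t\rho,\,s=t\sigma,\,u=tv$ in \eqref{chi}, using $(B_{t\cdot})\stackrel{d}{=}(\sqrt t\,B_\cdot)$ together with the elementary identity $\nabla q_{t\beta}(\sqrt t\,y)=t^{-(d+1)/2}\,\nabla q_\beta(y)$, one finds $\chi_\tau(t,t\rho)\stackrel{d}{=}t^{(3-d)/2}\,\chi_{\tau/t}(1,\rho)$ as processes in $\rho\in[0,1]$, hence by \eqref{012504}
\[
\la M^\tau(\cdot;t)\ra_t\ \stackrel{d}{=}\ t^{\,4-d}\,\la M^{\tau/t}(\cdot;1)\ra_1 .
\]
Since $4-d>0$ for $d=1,2$, it is enough to exhibit one $\theta_0>0$ with $\sup_{\tau>0}\E_\B\!\big[\exp\{\theta_0\,\la M^\tau(\cdot;1)\ra_1\}\big]<+\infty$; then $t_0:=(\theta_0/c)^{1/(4-d)}$ works, uniformly in $\eps$.

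This exponential bound I would obtain from a moment estimate: it is enough to show $\E_\B[\la M^\tau(\cdot;1)\ra_1^{\,n}]\le n!\,C^n$ with $C$ independent of $\tau$, after which any $\theta_0<1/C$ suffices. By \eqref{012504} and symmetry in the time variables,
\[
\E_\B\!\big[\la M^\tau(\cdot;1)\ra_1^{\,n}\big]=\int_{[0,1]^n}\E_\B\!\Big[\prod_{k=1}^n|\chi_\tau(1,r_k)|^2\Big]dr=n!\int_{0<r_1<\cdots<r_n<1}\E_\B\!\Big[\prod_{k=1}^n|\chi_\tau(1,r_k)|^2\Big]dr ,
\]
so the factorial is already accounted for and it remains to bound the ordered integral by $C^n$. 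For this I would expand each $|\chi_\tau(1,r_k)|^2$ in the Fourier variables exactly as in the proof of Proposition~\ref{p.weakcon}, using $\nabla q_\sigma(x)=\tfrac{i}{(2\pi)^d}\int\xi\,e^{-|\xi|^2\sigma/2}e^{i\xi\cdot x}\,d\xi$. Two structural points drive the estimate: (i) every factor depending on $\tau$ is a phase $e^{\mp i\tau|\xi|^2/2}$ of modulus one, so it disappears upon taking absolute values and all bounds become automatically uniform in $\tau$; and (ii) integrating out the $2n$ ``$s$-type'' time variables gains a factor $\les(1+|\cdot|^2)^{-1}$ for each of the $2n$ Fourier variables, exactly as in the step producing \eqref{072204}. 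What is left is the Gaussian expectation $\E_\B[\prod_k e^{i\xi_k\cdot(B_{r_k}-B_{u_k})-i\eta_k\cdot(B_{r_k}-B_{u'_k})}]$, which on each region determined by the order of the time variables is a product of factors $e^{-\frac12|\ell|^2\Delta}$ with $\ell$ a signed partial sum of the $\xi_k,\eta_k$ and $\Delta$ the corresponding time increment; integrating out the increments over $[0,1]$ yields further factors $\les(1+|\ell|^2)^{-1}$, leaving a sum over admissible time-orderings of Fourier integrals of exactly the type in \eqref{072204}, each convergent since $d\le2$.

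The main obstacle is the combinatorial control of this last expression: one must check that the sum over admissible time-orderings of these Fourier integrals contributes only $O(C^n)$, not a faster-growing quantity such as $(n!)^2 C^n$, so that $\sum_n(\theta_0 C)^n<+\infty$ for small $\theta_0$. This is the analogue, for the Clark--Ocone integrand \eqref{chi}, of the classical moment estimates for the renormalized self-intersection local time of planar Brownian motion (cf.\ \cite{le1992some} and the Clark--Ocone-based estimates in \cite{hu2008integral}): the Gaussian expectation factorizes along the time ordering, the resulting Fourier integrals ``chain'' through the shared variables $\xi_k,\eta_k$ so that each converges by the $d\le2$ power counting already seen in \eqref{072204}, and the bookkeeping of signs, orderings and the constraints $u_k,u'_k<r_k$ has to be shown to cost only an exponential factor. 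The factorial growth has been extracted once and for all by the symmetrization over $r_1,\dots,r_n$, and — by point (i) — the whole estimate is uniform in $\tau$; this is precisely what allows $t_0$ to be chosen depending only on $\theta$.
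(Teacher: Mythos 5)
Your two reduction steps are sound: the passage through \eqref{022404} to a uniform-in-$\tau$ bound for $\E_\B[\exp\{c\la M^\tau(\cdot;t)\ra_t\}]$ is exactly how the paper begins, and the Brownian-scaling identity $\la M^\tau(\cdot;t)\ra_t\stackrel{d}{=}t^{4-d}\la M^{\tau/t}(\cdot;1)\ra_1$ is correct and is a genuinely nice observation not in the paper (it cleanly explains why smallness of $t$ can trade against smallness of $\theta$). The problem is that you then hang the entire proof on the moment bound $\E_\B[\la M^\tau(\cdot;1)\ra_1^n]\le n!\,C^n$ with $C$ uniform in $\tau$ and $n$, and you do not prove it: you yourself flag the decisive step --- showing that the sum over admissible interleavings of the $3n$ time variables $r_k,u_k,u_k'$ contributes only an exponential factor --- as ``the main obstacle'' and leave it as something that ``has to be shown.'' This is not a routine verification. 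The number of orderings grows super-exponentially in $n$, so one must extract compensating smallness from the chained Fourier integrals themselves; in $d=2$ this is precisely the (known to be delicate) problem of exponential integrability of the renormalized self-intersection local time, where the exponential moment is finite only below a critical threshold. Asserting that each integral ``converges by the $d\le2$ power counting already seen in \eqref{072204}'' gives finiteness for each fixed $n$ but says nothing about the uniform constant $C^n$, which is the whole point. As written, the proof has a genuine gap at its core.

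For comparison, the paper avoids all of this combinatorics by bounding the Clark--Ocone integrand $\chi_\tau(t,r)$ pathwise. In $d=1$, Lemma~\ref{l.inte} gives the deterministic bound $|\chi_\tau(t,r)|\les r$, so $\la M^\tau(\cdot;t)\ra_t\les t^3$ and \eqref{012404a} holds for every $t_0$. In $d=2$, integrating out the $s$-variable in \eqref{may3104} yields
\begin{equation*}
|\chi_\tau(t,r)|\les\int_0^r|B_r-B_u|^{-1}\,du ,
\end{equation*}
and the identity $\int_0^r\mathscr{R}_u^{-1}du=2(\mathscr{R}_r-b_r)$ for the two-dimensional Bessel process converts the exponential moment of $\la M^\tau(\cdot;t)\ra_t$ into Gaussian exponential moments, finite for small $t$ and trivially uniform in $\tau$. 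If you want to keep your scaling reduction, the efficient way to close your argument is to import this pathwise bound at $t=1$ rather than to pursue the $n$-th moment expansion.
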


\begin{proof} Thanks to \eqref{022404}, the estimate
 \eqref{012404} is a result of  the following claim: for any
$\theta>0$ there exists $t_0>0$ such that
\begin{equation}
\label{012404a}
\sup_{t\in[0,t_0]}\sup_{\tau>0}\E_\B\left[ \exp\left\{\theta \la M^\tau(\cdot;t)\ra_{t}\right\}\right] <+\infty.
\end{equation}
Let us recall   \eqref{chi}:
\begin{equation}\label{may3104}
\chi_{\tau}(t,r)=\frac{1}{(2\pi)^\frac{d}{2}}\int_0^rdu\left\{\int_0^{t-r} \frac{B_u-B_r}{
  (i\tau+s)^{\frac{d}{2}+1}}
e^{-\frac{|B_r-B_u|^2}{2(i\tau+s)}}ds\right\}.
\end{equation}

\subsubsection*{The case $d=1$} 

We shall need the following.
\begin{lemma}\label{l.inte}
There exists a constant $C>0$ such that for all $t,\lambda,\tau>0$, we have 
\[
\left|\int_0^t \frac{1}{(i\tau+s)^{\frac{3}{2}}}e^{-\frac{\lambda}{i\tau+s}} ds \right|\leq \frac{C}{\sqrt{\lambda}}.
\]
\end{lemma}
\begin{proof}
We have 
\[
\begin{aligned}
\left|\int_0^t \frac{1}{(i\tau+s)^{\frac{3}{2}}}e^{-\frac{\lambda}{i\tau+s}} ds\right|\leq& \int_0^t \frac{1}{(\tau^2+s^2)^\frac34} e^{-\frac{\lambda s}{\tau^2+s^2}} ds\\
\les & \left(\int_0^\tau+\int_\tau^t\right)\frac{1}{(\tau+s)^{\frac32}} \exp\left\{-\frac{\lambda}{s+\tau^2/s}\right\}ds:=I_1+I_2.
\end{aligned}
\]
When $s\in (0,\tau)$, we have $s+\tau^2/s\leq 2\tau^2/s$, so 
\[
\begin{aligned}
I_1\leq & \int_0^\tau \frac{1}{(\tau+s)^\frac32} e^{-\frac{\lambda s}{2\tau^2}} ds=\frac{1}{\sqrt{\tau}} \int_0^1 \frac{1}{(1+s)^\frac32} e^{-\frac{\lambda s}{2\tau}} ds
\le \frac{2\sqrt{\tau}}{\lambda}(1-e^{-\frac{\lambda}{2\tau}}) \les \frac{1}{\sqrt{\lambda}}.
\end{aligned}
\]
When $s\in (\tau,t)$, we have $s+\tau^2/s\leq 2s$, so 
\[
\begin{aligned}
I_2 \leq \int_\tau^t  \frac{1}{(\tau+s)^\frac32} e^{-\frac{\lambda}{2s}} ds \leq \int_0^\infty  \frac{1}{s^\frac32} e^{-\frac{\lambda}{2s}}ds 
\les  \frac{1}{\sqrt{\lambda}}.
\end{aligned}
\]
The proof is complete.
\end{proof}

Using the above lemma we conclude that there exists a constant $C>0$ such that 
\[
\left|\int_0^{t-r} \frac{B_r-B_u}{(i\tau+s)^{\frac32}} e^{-\frac{|B_r-B_u|^2}{2(i\tau+s)}} ds\right| \leq C
\]
for all $r\in (0,t)$, which, in light of  \eqref{012504} and (\ref{may3104}), implies 
\[
\la M^\tau(\cdot;t)\ra_t \les\int_0^t r^2 dr =\frac{t^3}{3}.
\]
Thus, \eqref{012404} holds  for all $t_0>0$ in $d=1$, and we can remove the small time constraint in Theorem~\ref{t.mainth} in $d=1$.

\subsubsection*{The case $d=2$}  Integrating out the $s$ variable gives
\[
\chi_{\tau}(t,r)=-\int_0^r \frac{(B_r-B_u)}{\pi|B_r-B_u|^2}\Big(  
e^{-\frac{|B_r-B_u|^2}{2(i\tau +t-r)}}-e^{-\frac{|B_r-B_u|^2}{2i\tau}}\Big) du,
\]
which, together with \eqref{012504}, implies that
there exists $C>0$ such that
\[
\la M^\tau (\cdot;t)\ra_t \le C \int_0^t \left(\int_0^r |B_r-B_u|^{-1}du \right)^2 dr.
\]
Therefore, by the above and  Jensen's inequality, we conclude that
\begin{align}
\label{022504}
\E_\B[\exp\left\{\theta \la M^\tau(\cdot;t)\ra_t\right\}]  \leq &
\E_\B\Big[ \exp\Big\{\theta C\int_0^t \Big(\int_0^r |B_r-B_u|^{-1}du \Big)^2 dr\Big\}\Big]\\
\leq & \frac{1}{t} \int_0^t \E_\B\Big[\exp\Big\{\theta C
    t\Big(\int_0^r |B_r-B_u|^{-1}du \Big)^2\Big\}\Big]dr.\nonumber
\end{align}
Note that, for a fixed $r>0$, we have
\[
\int_0^r |B_r-B_u|^{-1} du \stackrel{\text{law}}{=}\int_0^r \mathscr{R}_u^{-1} du ,
\]
where $\mathscr{R}_u:=|B_u|$, $u\ge0$ is a Bessel process of dimension
$2$. 
An application of the It\^o formula shows that $\left(\mathscr{R}_r\right)_{r\ge0}$ satisfies 
\[
\int_0^r \mathscr{R}_u^{-1} du = 2(\mathscr{R}_r-b_r),\quad r\ge0.
\]
Here, $(b_r)_{r\ge0}$ is a
standard one dimensional Brownian motion. 
Having this in mind, we estimate the utmost right hand side of
\eqref{022504} using the Cauchy-Schwarz inequality and obtain
\begin{equation}\label{e.dec72}
 \E_\B[\exp\left\{\theta \la M^\tau(\cdot;t)\ra_t\right\}]  \leq\frac{1}{t}\int_0^t \Big\{ \E_\B\Big[ \exp\Big\{\theta C t \mathscr{R}_r^2\Big\} 
\vphantom{\int_0^1}\Big]\Big\}^{1/2}\Big\{\E_\B\Big[ \exp\Big\{\theta C t b_r^2\Big\}\vphantom{\int_0^1}\Big]\Big\}^{1/2} dr.
\end{equation}
It is clear that when $t$ is sufficiently small, the last expression
is bounded independent of $\tau$, which completes the proof of
\eqref{012404a}, and thus that of Proposition~\ref{p.uniformin}.
\end{proof}

\subsection*{Proof of Theorem~\ref{t.mainth}}

Now we can finish the proof of the main result. 
By \eqref{e.fkre2},  \eqref{e.defrho} and \eqref{e.fkre3}, it remains to prove the uniform integrability of  random variables 
$Z_\eps(t,\xi)$ given in \eqref{e.newz}. To do so, we bound their second moments. Using the Cauchy-Schwarz inequality
we get
\[
\begin{aligned}
\E_\B[|Z_\eps(t,\xi)|^2]=&\E_\B\Big[ e^{-\sqrt{2}\xi\cdot B_t}  
\exp\Big\{\vphantom{\int_0^1}-2(2\pi)^{d/2}\mathrm{Re}[(-i)^\frac{d}{2}(N^{\eps})^*(t;t)]\Big\}\Big]\\
\leq & \left\{\E_\B\big[e^{-2\sqrt{2}\xi\cdot B_t}\big]\right\}^{1/2}
\Big\{ \E_\B\Big[\exp\Big\{\vphantom{\int_0^1}-4(2\pi)^{d/2}\mathrm{Re}[(-i)^\frac{d}{2}(N^{\eps})^*(t;t)]\Big\}\Big]\Big\}^{1/2}.
\end{aligned}
\]
We wish to show that there exists $t_0>0$ such that the right 
side of the above estimate is uniformly bounded in $\eps>0$ for
$t\in(0,t_0)$. This will obviously imply the uniform integrability 
of~$Z_\eps(t,\xi)$ and complete the proof of Theorem \ref{t.mainth}   after passing to the limit in \eqref{e.fkre2}.
To obtain the desired bound we consider the following martingale for a fixed $t>0$:
\[
{\bf
  N}^\eps(s;t):=-4(2\pi)^{d/2}\mathrm{Re}[(-i)^\frac{d}{2}(N^{\eps})^*(s;t)],\quad
0\le s\le t.
\]
We have
\[
 \la {\bf N}^\eps(\cdot;t)\ra_t  \leq 16(2\pi)^d   \la N^{\eps}(\cdot;t)\ra_t,
\]
for any $\theta>0$. By Proposition~\ref{p.uniformin}, there exists $t_0>0$ depending on $\theta$ such that  
\[
\sup_{t\in[0,t_0]}\sup_{\eps>0} \E_\B[e^{ \theta \la {\bf N}^\eps(\cdot;t)\ra_{t}}] \leq \sup_{t\in[0,t_0]}
\sup_{\eps>0} \E_\B[ e^{16(2\pi)^d\theta \la N^\eps(\cdot;t)\ra_{t}}]<+\infty.
\]
For $\theta=2$, we adjust the respective $t_0$ as in the statement of
Proposition~\ref{p.uniformin}. We have then 
\begin{eqnarray*}
&&
\sup_{t\in[0,t_0]}\sup_{\eps>0}\E_\B[e^{{\bf N}^\eps(t;t)}]=\sup_{t\in[0,t_0]}\sup_{\eps>0}\E_\B\Big[ \exp\Big\{{\bf
      N}^\eps(t;t)-\la {\bf N}^\eps(\cdot;t) \ra_t\Big\}
  e^{\la {\bf N}^\eps(\cdot;t) \ra_t}\vphantom{\int_0^1}\Big]
 \\
 &&
 \leq \sup_{t\in[0,t_0]}\sup_{\eps>0}\Big\{\E_\B\Big[ \exp\Big\{2{\bf
       N}^\eps(t;t)-2\la {\bf N}^\eps(\cdot;t)
   \ra_t\Big\}\vphantom{\int_0^1}\Big]\Big\}^{1/2}
   \Big\{\E_\B
   \Big[\vphantom{\int_0^1}e^{2\la {\bf N}^\eps(\cdot;t)   \ra_t}\Big]\Big\}^{1/2}
\\
 &&
= \sup_{t\in[0,t_0]}\sup_{\eps>0}\Big\{\E_\B
   \Big[\vphantom{\int_0^1}e^{2\la {\bf N}^\eps(\cdot;t)
   \ra_t}\Big]\Big\}^{1/2}<+\infty.
\end{eqnarray*}
In the last line of the above display, we used the fact that $\exp\{2{\bf
       N}^\eps(t;t)-2\la {\bf N}^\eps(\cdot;t)
   \ra_t\}$ is a martingale for fixed $\eps>0$, which comes from the Novikov's condition and the boundedness of $\la {\bf N}^\eps(\cdot;t)
   \ra_t$. The proof of Theorem \ref{t.mainth} is  complete.

\section{Proof of the homogenization result}\label{sec5}

We now prove Theorem~\ref{t.homo}. 
Assume without loss of generality that the initial
condition $\what{\phi}_0(\xi)$ for \eqref{012104} is compactly supported. 
For an arbitrary~$\what{\phi}_0\in L^2(\bbR^2)$, we can argue by an approximation, since  both \eqref{012104} and \eqref{012104h} preserve the~$L^2(\bbR^2)$ norm.

By Proposition~\ref{p.fk} and (\ref{e.fkre1}), we have for any $(t,\xi)$,
\begin{equation}\label{e.dec71}
\begin{aligned}
\E[\what{\phi}_\eps(t,\xi)]=&\what{\phi}_0(\xi) \E_\B\left[ \exp \left\{ i\sqrt{i}\xi\cdot B_t-\frac{1}{\log\eps^{-1}}\int_{[0,t]^2_{<}} R_\eps(\sqrt{i}(B_s-B_u))dsdu\right\}\right]\\
=&\what{\phi}_0(\xi) \E_\B\left[ \exp \left\{ i\sqrt{i}\xi\cdot B_t+\frac{2\pi i}{\log\eps^{-1}} \int_0^{+\infty}\cX_{\eps^2\lam^{-2}}^*(t)\lam^{-2}\mu(d\lam)\right\}\right].
\end{aligned}
\end{equation}
By Lemma~\ref{l.rho} and \eqref{e.defrho1}, we have
\[
\lim_{\eps\to0}\frac{2\pi i}{\log\eps^{-1}}\int_0^{+\infty} \E_\B[\cX_{\eps^2\lambda^{-2}}^*(t)]\lambda^{-2}\mu(d\lambda)=\frac{it\bar{R}_2}{\pi}.
\]
Combining with Proposition~\ref{p.weakcon}, we further derive
\[
\lim_{\eps\to0}\frac{2\pi i}{\log\eps^{-1}} \int_0^{+\infty}\cX_{\eps^2\lam^{-2}}^*(t)\lam^{-2}\mu(d\lam)=  \frac{it\bar{R}_2}{\pi}\quad\mbox{in $L^2(\Sigma)$. }
\]

We claim that for any $t,\theta>0$, there exists $\eps_0>0$ such that 
\begin{equation}\label{e.dec81}
\sup_{\eps\in(0,\eps_0)} \E_\B\left[\exp\left\{\frac{\theta}{|\log \eps|^2} \la N^\eps(\cdot; t)\ra_t\right\}\right]<+\infty.
\end{equation}
This comes from the same proof of Proposition~\ref{p.uniformin} -- we only need to replace $\theta\mapsto \theta/|\log\eps|^2$ and note that the r.h.s. of \eqref{e.dec72} is uniformly bounded in $\eps\in(0,\eps_0)$ for some small $\eps_0$. Thus, by following the proof of Theorem~\ref{t.mainth}, we have 
\[
\begin{aligned}
\lim_{\eps\to0}\E[\what{\phi}_\eps(t,\xi)]&= \what{\phi}_0(\xi) \E_\B\Big[
  \exp\Big\{i\sqrt{i}\xi\cdot B_t+it\frac{\bar R_2}{\pi}\Big\}\Big]
= \what{\phi}_0(\xi) \exp\Big\{-it\Big(\frac{|\xi|^2}{2}-\frac{\bar
    R_2}{\pi}\Big)\Big\}\\
    &=\what{\phi}_{\hom}(t,\xi),
\end{aligned}
\]
for any $t>0,\xi\in\R^2$.

In addition, by applying the Cauchy-Schwarz inequality to \eqref{e.dec71} and using \eqref{e.dec81}, we have the simple estimate 
\[
|\E[\what{\phi}_\eps(t,\xi)]| \les
|\what{\phi}_0(\xi)| \sqrt{\E_\B\left[\left|\exp\{i\sqrt{i}\xi\cdot B_t\}\right|^2\right]} \les|\what{\phi}_0(\xi)| e^{C|\xi|^2t}.
\]
As
$\what{\phi}_0$ has compact support, we have
\[
|\E[\what{\phi}_\eps(t,\xi)]\what{\phi}_{\hom}^*(t,\xi)|\les |\what{\phi}_0(\xi)|^2e^{C|\xi|^2t} \in L^1(\R^2),
\]
thus, by the dominated convergence theorem and the mass conversation 
\[
\E\|\what{\phi}_\eps(t,\cdot)\|_{L^2(\bbR^d)}^2=\|\what{\phi}_0\|_{L^2(\bbR^d)}^2,
\]
we have 
\[
\begin{aligned}
&\int_{\R^2}
\E[|\what{\phi}_\eps(t,\xi)-\what{\phi}_{\hom}(t,\xi)|^2]d\xi
=2\int_{\R^2} |\what{\phi}_0(\xi)|^2 d\xi-2\mathrm{Re}\left[\int_{\R^2} \E[\what{\phi}_\eps(t,\xi)]\what{\phi}_{\hom}^*(t,\xi)d\xi\right]
\to  0,
\end{aligned}
\]
as $\eps\to 0$. The proof of Theorem~\ref{t.homo} is complete.\quad

\appendix

\section{The Clark-Ocone formula}
\label{s.co}

We  recall some facts from  the Malliavin calculus for a standard $d$-dimensional Brownian motion $B_r=(B_r^1,\ldots,B_r^d), r\geq 0$ on $(\Sigma,\mathcal{F},\Pb_\B)$ that are used in our argument. We refer to \cite{nualart2006malliavin} for a more detailed presentation.
Let $H=L^2([0,\infty),\R^d)$ be the Hilbert space corresponding to the
standard inner product $\langle\cdot,\cdot\rangle_H$. We define
a mapping $B:H\to L^2(\Sigma,\mathcal{F},\Pb_\B)$ by letting
\[
B(h)=\int_0^\infty h(r)dB_r=\sum_{j=1}^d  \int_0^\infty
h^j(r)dB_r^j,\quad h=(h^1,\ldots,h^d)\in H,
\]
so that
$$
\E_{\B}[B(h_1)B(h_2)]=\langle h_1,h_2\rangle_H\quad\mbox{ for
}h_1,h_2\in H.
$$
For a random variable of the form $F=f(B(h_1),\ldots,B(h_n))$, where
$f:\R^n\to \R$ is a smooth function of polynomial
growth and $h_k \in H, k=1,\ldots,n$, the derivative operator is defined as 
\[
D_r^j F=\sum_{k=1}^n \partial_{x_k} f(B(h_1),\ldots,B(h_n)) h^j_k(r),\quad j=1,\ldots,d
\]
and we write $D_r=(D_r^1,\ldots,D_r^d)$. The derivative is a closeable
operator on $L^2(\Sigma)$ with values in $L^2(\Sigma;H)$. Denote by $\mathbb{D}^{1,2}$  the Hilbert space defined as the completion of the random variables $F$ with respect to the product
\[
\la F,G\ra_{1,2}:=\E_\B[FG]+\E_\B\left[\sum_{j=1}^d  \int_0^\infty (D_r^jF)( D_r^j G) dr\right].
\]
The Clark-Ocone formula, see 
\cite[Proposition 1.3.14 p. 46]{nualart2006malliavin}, says that if $F\in \mathbb{D}^{1,2}$, then  
\[
F=\E_\B[F]+\int_0^\infty \E_\B[D_rF|\mathcal{F}_r] dB_r
=\E_\B[F]+\sum_{j=1}^d \int_0^\infty \E_\B[ D_r^j F | \mathcal{F}_r] dB_r^j,
\]
with $\left(\mathcal{F}_r\right)$ the natural filtration corresponding the Brownian. In our case, 
with $F=\cX_\tau(t)$ for fixed $t,\tau>0$, we have 
\[
\cX_\tau(t)=\E_\B[\cX_\tau(t)]+\int_0^\infty \E_\B[ D_r \cX_\tau(t)|\mathcal{F}_r] dB_r.
\]
Recall that 
\[
\cX_\tau(t)=\int_0^t \int_0^s s_\tau(B_s-B_u)duds,
\]
therefore,
\[
D_r\cX_\tau(t)=\int_0^t\int_0^s \nabla s_\tau(B_s-B_u)1_{[u,s]}(r)duds=1_{[0,t]}(r)\int_r^t \int_0^r \nabla s_\tau(B_s-B_u)duds.
\]
This implies
\begin{eqnarray*}
&&\E_\B[D_r\cX_\tau(t)|\mathcal{F}_r]=1_{[0,t]}(r)\int_r^t \int_0^r \E_\B[\nabla s_\tau(B_s-B_u) |\mathcal{F}_r] duds\\
&&=1_{[0,t]}(r)\int_r^t \int_0^r \nabla s_\tau\star q_{s-r}(B_r-B_u)duds
= 1_{[0,t]}(r) \int_r^t \int_0^r \nabla q_{i\tau +s-r}(B_r-B_u)duds.
\end{eqnarray*}
Here, we have used the fact that $s_{\tau}=q_{i\tau}$ and $q_{i\tau}\star q_{s-r}=q_{i\tau+s-r}$. Thus, we have
\begin{equation}\label{e.cko}
\cX_\tau(t)-\E_\B[\cX_\tau(t)]=\int_0^t \left(\int_r^t \int_0^r \nabla q_{i\tau +s-r}(B_r-B_u)duds\right)dB_r,
\end{equation}
which is (\ref{e.made}).

\end{document}